\theoremstyle{plain}
\newtheorem{theorem}{Theorem}[section]
\newtheorem{definition}[theorem]{Definition}
\newtheorem{lemma}[theorem]{Lemma}
\newtheorem{prop}[theorem]{Proposition}
\newtheorem{cor}[theorem]{Corollary}
\newtheorem{rem}[theorem]{Remark}
\newtheorem{ex}[theorem]{Example}
\newtheorem{problem}{Problem}
\renewcommand{\b}{\begin{equation}}
\newcommand{\e}{\end{equation}}
\newcommand{\g}{\mathfrak{g}}
\newcommand\C{{\mathbb C}}
\newcommand\R{{\mathbb R}}
\newcommand{\ov}[1]{\overline{ #1}}
\newcommand{\N}{\nabla}
\newcommand{\bk}{\overline{k}}
\newcommand{\br}{\overline{r}}
\title{Special Hermitian metrics  on  compact solvmanifolds}
\begin{document}
\dedicatory{Dedicated to Professor Paul Gauduchon on the occasion of his 70'th birthday.}
\author{Anna Fino  and Luigi Vezzoni}
\date{\today}
\subjclass[2000]{Primary 53C15 ; Secondary 53B15, 53C30}
\keywords{Hermitian metrics, symplectic forms, nilpotent Lie groups}
\address{Dipartimento di Matematica \lq\lq Giuseppe Peano\rq\rq \\ Universit\`a di Torino\\
Via Carlo Alberto 10\\
10123 Torino\\ Italy} 
 \email{annamaria.fino@unito.it}\email{luigi.vezzoni@unito.it}

\thanks{This work was supported by the project FIRB ``Geometria differenziale e teoria geometrica delle funzioni'',
 the project PRIN
\lq\lq  Variet\`a reali e complesse: geometria, topologia e analisi armonica" and by G.N.S.A.G.A. of I.N.d.A.M}

\begin{abstract} We review some constructions and properties of complex manifolds admitting pluriclosed and balanced metrics. We prove  that  for a  $6$-dimensional solvmanifold endowed with
an invariant complex structure $J$ having
holomorphically trivial canonical bundle   the pluriclosed flow has a long time solution for every invariant initial 
datum. Moreover,  we state a new conjecture about the existence of balanced and SKT metrics on compact complex manifolds. We show that the conjecture is true  for  nilmanifolds of dimension $6$ and $8$ and for $6$-dimensional solvmanifolds with holomorphically trivial canonical bundle.
\end{abstract}

\maketitle

\section{Introduction}

A very active field in  almost complex and  complex geometry is the seek of Hermitian metrics having special properties. A Riemannian metric $g$ on an almost complex manifold $(M,J)$ is called {\em Hermitian} if $g(JX,JY)=g(X,Y)$ for every vector fields $X$ and $Y$  on $M$. The pair $(g,J)$ is usually called an {\em almost Hermitian structure} (simply {\em Hermitian} when $J$ is integrable) and $\omega (X, Y) = g (X,JY)$ the {\em fundamental $2$-form}.  The pair $(g,J)$ specifies a ${\rm U}(n)$-structure whose intrinsic torsion can be identified with 
the covariant derivative  of  $ \omega$ with respect to the Levi-Civita connection of $g$. 
Therefore, in contrast to the K\"ahler case, when an almost Hermitian structure $(g,J)$ has  non-vanishing intrinsic torsion, the Levi-Civita connection of $g$ does not preserve $J$ and for this reason its role is often replaced by  other connections preserving $(g,J)$ but having torsion. A connection which preserves  $g$ and $J$  is usually called {\em Hermitian}. Fortunately,  the set of Hermitian connections always contains some canonical elements, distinguished by some properties of the torsion \cite{Gaudbumi}.  One of these connections was defined by Chern in \cite{chern}  to compute the representatives of the 
Chern classes, while another canonical connection was introduced by Bismut in \cite{B} to obtain an index theorem for non-K\"ahler manifolds. The choice of an Hermitian connection specifies a geometry on the manifold strictly linked to a special class of Hermitian metrics. The present paper focuses on some of these classes, specially on SKT   metrics (defined by the condition $\partial \bar \partial \omega=0$) and on balanced metrics (which are characterised by the condition $d^*\omega=0$). In particular we study the geometry of special metrics on {\em nilmanifolds} and {\em solvmanifolds} endowed with an {\em invariant complex structure}. By  a {\em nilmanifold}, we mean a compact manifold obtained as quotient of a simply-connected nilpotent Lie group $G$  by a lattice $\Gamma$. The definition of solvmanifold is the same, but the Lie group $G$ is taken {\em solvable} instead of nilpotent. In both cases,  by {\em invariant complex structure} we mean a complex structure which comes from   a   left-invariant complex structure on the Lie group $G$.   

For SKT metrics we take into account the so-called pluriclosed flow introduced by Streets and Tian in \cite{streets-tian2}, reviewing some our previous results about the long-time existence on nilmanifolds and providing new results on solvmanifolds with holomorphically trivial canonical bundle. 
It is well known that in any real dimension $2n$ the canonical bundle of a nilmanifold
$\Gamma \backslash G$ endowed with an invariant complex structure is holomorphically trivial.  Indeed, Salamon showed  in  \cite{salamon}  the existence
of a closed non-zero invariant $(n, 0)$-form. 
In \cite{FU}Ê the 6-dimensional solvmanifolds  $\Gamma \backslash G$ admitting an invariant
 complex structure with holomorphically trivial canonical bundle are determined.  The corresponding Lie algebras  $\frak g$ of the Lie groups $G$ and  the complex
structures are classified up to isomorphism, and the existence of  special Hermitian metrics (like for instance SKT  and balanced metrics) is studied. 
In this paper, by using these classifications, we show that if  $(M=\Gamma\backslash G,J)$  is  a $6$-dimensional solvmanifold endowed with
an invariant complex structure $J$ having
holomorphically trivial canonical bundle, then the pluriclosed flow has a long time solution for every invariant initial 
datum $g_o$.  

In the last part of the paper we study the  existence of two different types of Hermitian metrics on a  fixed complex manifold. We conjecture that in the non-K\"ahler compact case it  is never possible to find an SKT metric and also a balanced one. We prove the conjecture for nilmanifolds  of dimension $6$ and $8$ and for  $6$-dimensional solvmanifolds having holomorphically trivial canonical bundle.


\section{Canonical Connections in Hermitian Geometry}
The definition of {\em canonical connection} was introduced by 
Gauduchon in \cite{Gaudbumi} in order to unify some special Hermitian connections described in literature in different contexts. 
Roughly speaking,  an Hermitian connection $\nabla$  on an almost Hermitian manifold $(M, J, g)$ is called  {\em canonical} if a component of its torsion tensor $T$ vanishes. In order to explain precisely the definition, we have to introduce some notation. 

 The complex structure $J$ induces a splitting of the complexified tangent bundle $T_{\C}M=TM\otimes \C$ in $T_{\C}M=T^{1,0}\oplus T^{0,1}$. Consequently,  the bundle $\Lambda^{p}_{\C}M$ of complex $p$-forms on $M$ splits as 
$$
\Lambda^{p}_{\C}M=\bigoplus_{r+s=p}\Lambda^{r,s}
$$
and the differential operator can be written as $d=A+\partial+\bar\partial+\bar A$, accordingly to the above splitting. The components $A$ and $\bar A$  vanish if and only if $J$ is integrable, i.e. if and only if  the Nijenhuis tensor $$N(X,Y)=[JX,JY]-[X,Y]-J([JX,Y]+[X,JY])$$
vanishes and,  in this case,  $d$ reduces to $d=\partial+\bar \partial$. Furthermore\,  the bundle $\Lambda^2(TM)=\Lambda^{2}M\otimes TM$ 
of real $2$-forms taking value in the tangent bundle $TM$ inherits the splitting  
$$
\Lambda^2(TM)=\Lambda^{2,0}(TM)\oplus \Lambda^{1,1}(TM) \oplus \Lambda^{0,2}(TM)
$$
where
$$
\begin{aligned}
& \Lambda^{2,0}(TM)=\{B\in\Lambda^2(TM)\,\,:\,\,B(JX,Y)=JB(X,Y) \}\,,\\
\vspace{0.1cm}
& \Lambda^{1,1}(TM)=\{B\in\Lambda^2(TM)\,\,:\,\,B(JX,JY)=B(X,Y) \}\,,\\
\vspace{0.1cm}
& \Lambda^{0,2}(TM)=\{B\in\Lambda^2(TM)\,\,:\,\,B(JX,Y)=-JB(X,Y) \}\,.
\end{aligned}
$$
The bundle $\Lambda^{1,1}(TM)$ can be further  decomposed  as
$$
\Lambda^{1,1}(TM)=\Lambda^{1,1}_b(TM)\oplus \Lambda^{1,1}_c(TM)
$$
where the projection $B_b$ and $B_c$ of $B$ onto $\Lambda^{1,1}_b(TM)$ and $\Lambda^{1,1}_c(TM)$ 
are respectively given by 
$$
\begin{aligned}
&2g(B_b(X,Y),Z)=(g(B(X,Y),Z)-g(B(Z,X),Y)-g(B(Y,Z),X))\,,\\
&2g(B_c(X,Y),Z)= (g(B(X,Y),Z)+g(B(Z,X),Y)+g(B(Y,Z),X))\,.
\end{aligned}
$$ 
If $\nabla$ is an Hermitian connection, then its torsion tensor $T$ is a section of $\Lambda^{2}(TM)$ and its $(1,1)$-component splits accordingly to the obove decomposition as 
$$
T^{1,1}=T^{1,1}_c+T^{1,1}_b\,. 
$$

\begin{definition}
An Hermitian connection $\nabla$  on an almost Hermitian manifold $(M, J, g)$ is {\em canonical}  if its torsion $T$ satisfies $T^{1,1}_b=0$. 
\end{definition}
\noindent Let us consider the following notation:
\begin{itemize}
\item  $J$ extends to $r$-forms by
$J\alpha(X_1,\dots,X_r)=(-1)^r \alpha(JX_1,\dots,JX_r)$ and we denote by $d^c$ the operator $d^c=(-1)^rJdJ$ on $r$-forms.
\item The bundle $\Lambda^{3}M$ of real $3$-forms splits as
$$
\Lambda^{3}M=\Lambda^{+}M\oplus\Lambda^{-}M\,,
$$
where $\Lambda^{+}M=(\Lambda^{2,1}M\oplus\Lambda^{1,2}M)\cap \Lambda^{3}M$ and
$\Lambda^{-}M=(\Lambda^{3,0}M\oplus\Lambda^{0,3}M)\cap \Lambda^{3}M$. Given a $3$-form $\gamma$ we denote by
$\gamma^+$ and $\gamma^-$ the projection onto $\Lambda^{+}M$ and $\Lambda^{-}M$, respectively. 
\end{itemize}
\begin{theorem}[Gauduchon \cite{Gaudbumi}]
Every canonical connection $\nabla$ satisfies 
\begin{equation}
\label{nablat}
\begin{aligned}
g(\N_{X}Y,Z) = &\,g(D_{X}Y,Z)+\frac{t-1}{4}(d^c\omega)^+(X,Y,Z)+\frac{t+1}{4}(d^c\omega)^+(X, JY, JZ)-g(X,N(Y,Z))+\\
                 &\,\frac12 (d^c \omega)^- (X,Y,Z)\,,
\end{aligned}
\end{equation}
for some $t\in\R$, where $D$ is the Levi-Civita connection of $g$  and $N$ denotes the Nijenhuis tensor of $J.$
\end{theorem}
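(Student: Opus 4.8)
The plan is to realize every Hermitian connection as a deformation of the Levi-Civita connection $D$ and then to isolate the canonical ones. First I would write $\N = D + A$, where $A_X Y := \N_X Y - D_X Y$ is the difference tensor. Requiring $\N$ to be Hermitian imposes two pointwise conditions on $A$: compatibility with $g$ gives $g(A_X Y, Z) = -g(A_X Z, Y)$, and compatibility with $J$ gives $A_X(JY) = J(A_X Y)$, so that each $A_X$ lies in the unitary Lie algebra $\mathfrak{u}(n)$. Since $D$ is torsion-free, the torsion of $\N$ is $T(X,Y) = A_X Y - A_Y X$, and the Koszul-type manipulation using the skew-symmetry of $A_X$ inverts this to
\begin{equation*}
2g(A_X Y, Z) = g(T(X,Y),Z) - g(T(Y,Z),X) + g(T(Z,X),Y).
\end{equation*}
Thus a Hermitian connection is completely encoded by its torsion $T$, subject to the constraint that the recovered $A_X$ commute with $J$.

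Next I would translate the intrinsic torsion into classical terms. The failure of $D$ to preserve $J$ is measured by $D\omega$, and the standard almost-Hermitian identity expresses $g((D_X J)Y, Z)$ as a combination of $d\omega$ and the Nijenhuis tensor $N$; passing to $d^c$ via $d^c\omega(X,Y,Z) = -d\omega(JX,JY,JZ)$ recasts everything in the shape of the statement. Decomposing $T$ along $\Lambda^2(TM) = \Lambda^{2,0}(TM)\oplus\Lambda^{1,1}(TM)\oplus\Lambda^{0,2}(TM)$ and then $\Lambda^{1,1}(TM) = \Lambda^{1,1}_b(TM)\oplus\Lambda^{1,1}_c(TM)$, I would check that the component of $T$ obstructing integrability is the same for every Hermitian connection, equal to a fixed multiple of $N$; since $N(JX,Y) = -JN(X,Y)$ this component sits in $\Lambda^{0,2}(TM)$, and it is exactly the term $-g(X,N(Y,Z))$, the remaining non-integrable contribution supplying $\tfrac12(d^c\omega)^-$. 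The totally skew part $T^{1,1}_c$, by contrast, is a genuine $3$-form of type $\Lambda^+$ and is matched by the two $(d^c\omega)^+$ terms.

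With this dictionary in place, the canonical condition $T^{1,1}_b = 0$ removes precisely the non-form part of the $(1,1)$-torsion. Splitting the two $(d^c\omega)^+$ contributions $\tfrac{t-1}{4}(d^c\omega)^+(X,Y,Z) + \tfrac{t+1}{4}(d^c\omega)^+(X,JY,JZ)$ into their symmetric and antisymmetric parts under $Y \leftrightarrow JY$, one finds that the antisymmetric part carries the $t$-independent weight $-\tfrac14$ forced by the recovery formula, whereas the symmetric part $\tfrac{t}{4}\bigl[(d^c\omega)^+(X,Y,Z) + (d^c\omega)^+(X,JY,JZ)\bigr]$ lies in $\Lambda^{1,1}_c(TM)$ and so never feeds back into $T^{1,1}_b$. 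Its coefficient is therefore unconstrained: this single scalar is the parameter $t$, and normalizing so that $t=1$ and $t=-1$ return the Chern and the Bismut connection pins down the coefficients $\tfrac{t-1}{4}$ and $\tfrac{t+1}{4}$.

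I expect the main obstacle to be the type-decomposition bookkeeping: verifying that the $\Lambda^{1,1}_b(TM)$-projection of $A_X Y - A_Y X$ vanishes exactly for these coefficients, and keeping the $J$-types, the $\Lambda^{\pm}$-splitting of $3$-forms, and the Koszul symmetrization mutually consistent. No single computation is deep, but carrying all three normalizations at once is where the care lies.
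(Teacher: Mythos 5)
First, a point of order: the paper does not actually prove this theorem --- it is quoted from Gauduchon \cite{Gaudbumi} and used as a black box --- so your proposal can only be measured against Gauduchon's original argument, whose broad architecture (recover a metric connection from its torsion, decompose the torsion by $J$-type, cut out the canonical line) your sketch does follow. The difficulty is that your setup contains a genuine error, and it sits exactly where the content of the theorem lies. You assert that compatibility of $\N$ with $J$ forces $A_X(JY)=J(A_XY)$, i.e.\ that each $A_X$ lies in $\mathfrak{u}(n)$. Writing $\N=D+A$ and imposing $\N J=0$, what one actually gets is $(D_XJ)Y+A_X(JY)-J(A_XY)=0$, i.e.\ $[A_X,J]=-D_XJ$. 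Since $D$ is the Levi-Civita connection of a possibly non-K\"ahler Hermitian metric, $D_XJ$ does not vanish in general --- its vanishing for all $X$ is precisely the K\"ahler condition --- so $A_X$ cannot commute with $J$: its $J$-anti-commuting part is forced, $A_X^-=-\tfrac12 J(D_XJ)$, and only its $J$-commuting part is a free element of $\mathfrak{u}(n)$. With your condition as stated, no connection of the form $D+A$ preserves $J$ unless $DJ\equiv 0$; on a non-K\"ahler manifold the family you parametrize contains no Hermitian connection at all, and in the K\"ahler case the theorem trivializes to $\N=D$ because $d^c\omega=0$ and $N=0$.

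This is not a cosmetic slip, because the forced relation $[A_X,J]=-D_XJ$ is the only mechanism through which $d^c\omega$ and $N$ can enter formula \eqref{nablat}: one substitutes the standard identity expressing $g((D_XJ)Y,Z)$ through $d\omega$ and $N$ into the forced part $A_X^-$, and only then do the Koszul inversion and the canonical condition $T^{1,1}_b=0$ produce the fixed terms $-g(X,N(Y,Z))$, $\tfrac12(d^c\omega)^-$ and $-\tfrac14\bigl[(d^c\omega)^+(X,Y,Z)-(d^c\omega)^+(X,JY,JZ)\bigr]$, together with the one free direction $\tfrac{t}{4}\bigl[(d^c\omega)^+(X,Y,Z)+(d^c\omega)^+(X,JY,JZ)\bigr]$. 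Your later paragraphs implicitly use this corrected constraint --- you invoke the identity for $g((D_XJ)Y,Z)$ and assert that the $(0,2)$-part of $T$ is pinned to $N$ --- but as written they contradict your own second step. The rest of the outline is sound and matches Gauduchon's computation: the inversion formula $2g(A_XY,Z)=g(T(X,Y),Z)-g(T(Y,Z),X)+g(T(Z,X),Y)$ is correct for any metric connection, and the observation that the combination symmetric under $(Y,Z)\mapsto(JY,JZ)$ is totally skew of type $\Lambda^+$ and hence never feeds back into $T^{1,1}_b$ is exactly why a one-parameter family survives. Replacing ``$A_X\in\mathfrak{u}(n)$'' by ``$[A_X,J]=-D_XJ$'' and redoing the bookkeeping from there is what would turn the sketch into a proof.
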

When $J$ is integrable (i.e. when its Nijenhuis tensor vanishes), formula \eqref{nablat} simplifies to 
$$
g(\N^t_{X}Y,Z) = g(D_{X}Y,Z)+\frac{t-1}{4}(d^c\omega)(X,Y,Z)+\frac{t+1}{4}(d^c\omega)(X, JY, JZ)\,.
$$
In particular in the K\"ahler case the family reduces to a single connection. More generally, in the almost-K\"ahler   case we have a unique canonical connection (which is not the Levi-Civita connection  in this case) and in the co-symplectic case (i.e. when $\omega$ is co-closed) all the canonical connections have the same Ricci form (see e.g.  \cite[Corollary 3.3]{luigiproc}). Indeed, in general  the Ricci form of $\nabla^t$  is always a closed form which can be locally written as
the derivative of the 1-form $\theta^t (X) = \sum_{r = 1}^n g(\nabla^t Z_r, \overline{Z}_r)$, where $\{Z_r \}$ is a local unitary frame. If $\omega$ is co-closed, one can show that  $\theta^1= \theta^{-1}$.

\section{Strong K\"ahler metrics with torsion}
Let $(M,J,g)$ be an Hermitian manifold. For $t=-1$ the family \eqref{nablat} specifies the so-called {\em Bismut connection}  $\nabla^B$. This connection is the unique Hermitian connection whose torsion $T^B$, regarded as a $(0,3)$-tensor via the Hermitian metric $g$, is skew-symmetric and it was introduced  by Bismut in \cite{B}  to prove  an index theorem for non-K\"ahler Hermitian manifolds. Almost Hermitian structures admitting an  Hermitian connection with skew-symmetric torsion are characterized in \cite{FI}. In particular Theorem 10.1 in \cite{FI} implies that in the strictly almost-K\"ahler case such connections cannot exist. 

Usually the $3$-form induced by $T^B$ is denoted by $c$, i.e.
$$
c(X,Y,Z)=g(T^B (X,Y),Z)\,.
$$
\begin{prop}
The $3$-form $c$ is closed if and only if the fundamental form of the  Hermitian metric $g$ satisfies 
\begin{equation}\label{SKT}
\partial\bar\partial\omega=0\,.
\end{equation}
\end{prop}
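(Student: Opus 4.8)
The plan is to identify the Bismut torsion $3$-form $c$ explicitly as a constant multiple of $d^c\omega$, and then to reduce the closedness of $c$ to the elementary identity $dd^c=2i\,\partial\bar\partial$ valid in the integrable case.

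First I would specialise Gauduchon's formula \eqref{nablat} to the Bismut connection. Since $J$ is integrable the Nijenhuis contributions vanish, and setting $t=-1$ in the simplified formula gives
$$
g(\nabla^B_{X}Y,Z)=g(D_{X}Y,Z)-\tfrac12 (d^c\omega)(X,Y,Z).
$$
Using that $D$ is torsion-free, so that $D_XY-D_YX=[X,Y]$, and that $d^c\omega$ is totally skew, a short computation of $T^B(X,Y)=\nabla^B_XY-\nabla^B_YX-[X,Y]$ yields
$$
c(X,Y,Z)=g(T^B(X,Y),Z)=-(d^c\omega)(X,Y,Z),
$$
that is $c=-d^c\omega$. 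The full skew-symmetry of the right-hand side is precisely the defining property of the Bismut connection, so this also serves as a consistency check.

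Next I would record the standard identity $dd^c=2i\,\partial\bar\partial$. With the convention $d^c=(-1)^rJdJ$ adopted here, one checks that on $(p,q)$-forms the extension of $J$ acts as the scalar $(-1)^{p+q}i^{\,p-q}$, whence $d^c=i(\bar\partial-\partial)$; then
$$
dd^c=(\partial+\bar\partial)\,i(\bar\partial-\partial)=i(\partial\bar\partial-\bar\partial\partial)=2i\,\partial\bar\partial,
$$
using $\partial^2=\bar\partial^2=0$ and $\partial\bar\partial=-\bar\partial\partial$.

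Finally, combining the two steps gives $dc=-d(d^c\omega)=-dd^c\omega=-2i\,\partial\bar\partial\omega$. Since the factor $-2i$ is nonzero, $dc=0$ if and only if $\partial\bar\partial\omega=0$, which is exactly \eqref{SKT}, proving the proposition. The one genuinely computational point—and the step I would treat most carefully—is the passage from the connection formula to $c=-d^c\omega$, since it requires tracking the torsion conventions correctly; everything after that is the routine Hodge-theoretic identity for $dd^c$.
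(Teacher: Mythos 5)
Your proof is correct, and all three steps check out against the paper's conventions: specialising \eqref{nablat} to $t=-1$ with vanishing Nijenhuis tensor does give $c=-d^c\omega$ (the torsion-free property of $D$ kills the bracket terms, and the two $-\tfrac12(d^c\omega)$ contributions add up), the convention $d^c=(-1)^rJdJ$ does yield $d^c=i(\bar\partial-\partial)$ and hence $dd^c=2i\,\partial\bar\partial$, so $dc=-2i\,\partial\bar\partial\omega$ vanishes exactly when \eqref{SKT} holds. The paper itself states this proposition without proof, treating it as a standard fact; your argument is precisely the standard derivation that fills that gap, so there is nothing to contrast it with.
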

Hermitian metrics whose fundamental form satisfies \eqref{SKT} are usually called {\em strong K\"ahler with torsion} (SKT in short)   or {\em pluriclosed} (see e.g. \cite{FTsurvey} for a survey on SKT metrics).  
Such metrics have applications in type II string
theory and in $2$-dimensional supersymmetric $\sigma$-models \cite{GHR,strominger} and have relations with generalized K\"ahler structures (see for instance \cite{AS,FT3,Gualtieri,Hitchin}). Every compact  complex surface admits an SKT structure in view of the following 
\begin{theorem}[Gauduchon \cite{Gaustandard}]
Let $(M^n,J,g)$ be a compact Hermitian manifold of  complex dimension $n$. Then there exists in the conformal class of $g$ a unique Hermitian structure 
$\tilde g$ (up to homotheties) whose fundamental form $\tilde \omega$ satisfies 
$$
\partial\bar \partial \tilde \omega^{n-1}=0\,.
$$
\end{theorem}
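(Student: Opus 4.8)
The plan is to reduce the statement to a single scalar linear elliptic PDE in the conformal factor and to solve it by the maximum principle. First I would parametrise the conformal class: writing $\tilde g=\phi\,g$ with $\phi>0$ a smooth function, the fundamental form rescales as $\tilde\omega=\phi\,\omega$, so $\tilde\omega^{n-1}=u\,\omega^{n-1}$ with $u=\phi^{n-1}>0$. Since $u\mapsto u^{1/(n-1)}$ is a bijection of the positive functions, the theorem is equivalent to showing that the set
$$
\mathcal S=\{\,u\in C^\infty(M,\R)\ :\ u>0,\ i\partial\bar\partial(u\,\omega^{n-1})=0\,\}
$$
is a single ray $\R_{>0}\cdot u_0$. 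As the top-degree real form $i\partial\bar\partial(u\,\omega^{n-1})$ is a multiple of the volume form, it defines a real second-order linear differential operator $L$ on functions by
$$
i\partial\bar\partial(u\,\omega^{n-1})=L(u)\,\frac{\omega^n}{n!}\,.
$$
Expanding by the Leibniz rule, the second-order part of $L$ comes only from $i\partial\bar\partial u\wedge\omega^{n-1}$, which is a positive multiple of $(\mathrm{tr}_\omega\, i\partial\bar\partial u)\,\omega^n$; hence $L$ is elliptic with the same principal symbol as the Riemannian Laplacian, the remaining terms (built from the derivatives of $\omega^{n-1}$) being of lower order.

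Next I would extract the crucial integral identity. Since $i\partial\bar\partial(u\,\omega^{n-1})$ is $d$-exact (up to a real constant it equals $dd^c(u\,\omega^{n-1})$, with $d^c$ as above), Stokes' theorem on the closed manifold $M$ yields
$$
\int_M L(u)\,\frac{\omega^n}{n!}=0\qquad\text{for every }u\,.
$$
Reading this as $\langle L(u),1\rangle=0$ in $L^2(\omega^n)$ for all $u$, the formal adjoint $L^*$ satisfies $\langle u,L^*(1)\rangle=0$ for all $u$, i.e. $L^*(1)=0$; in particular $L^*$ has no zeroth-order term. By the strong maximum principle on the compact $M$ any solution of $L^*v=0$ attains an interior maximum and minimum and is therefore constant, so $\ker L^*=\R\cdot 1$ is one-dimensional. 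Since $L$ and $L^*$ are elliptic with the same scalar principal symbol, their Fredholm indices coincide and hence vanish, giving $\dim\ker L=\dim\ker L^*=1$.

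It remains to produce a \emph{positive} generator of $\ker L$, which is the heart of the matter. Choosing a constant $\lambda_0$ so large that the zeroth-order coefficient of $L+\lambda_0$ is everywhere positive, the operator $L+\lambda_0$ becomes invertible and its inverse $T=(L+\lambda_0)^{-1}$ is compact and, by the maximum principle, strongly positivity-preserving. The Krein--Rutman theorem then furnishes a principal eigenpair: a simple eigenvalue equal to the spectral radius $r(T)>0$ with a strictly positive eigenfunction $\phi>0$, corresponding to a real eigenvalue $\lambda_1$ of $L$ with $L\phi=\lambda_1\phi$. Integrating this relation and invoking the identity above gives $\lambda_1\int_M\phi\,\omega^n=\int_M L(\phi)\,\omega^n=0$, and since $\phi>0$ forces $\int_M\phi\,\omega^n>0$ we conclude $\lambda_1=0$. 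Thus $\phi>0$ solves $L\phi=0$, and the simplicity of the Krein--Rutman eigenvalue (matching $\dim\ker L=1$) shows every element of $\ker L$ is a real multiple of $\phi$, whence $\mathcal S=\R_{>0}\cdot\phi$. Translating back, $\tilde g=\phi^{1/(n-1)}g$ is the desired Hermitian metric, unique up to homothety.

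The step I expect to be the main obstacle is exactly this last one: the existence of a nontrivial solution of $L u=0$ follows immediately from ellipticity and the vanishing of the index, but knowing that the one-dimensional kernel is spanned by a function of constant sign — so that $\tilde g$ is a genuine metric and the solution is unique up to \emph{homothety} rather than merely up to scale along a possibly sign-changing line — requires the maximum principle fed through the Krein--Rutman (Perron--Frobenius) mechanism. Checking its hypotheses, namely the strong positivity and compactness of $T$ and that $0$ is the principal eigenvalue rather than a subordinate one, is where the genuine work lies; the conformal reduction, the ellipticity of $L$, and the Stokes identity are comparatively routine.
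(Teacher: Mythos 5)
The paper does not actually prove this statement: it is quoted as background, attributed to Gauduchon \cite{Gaustandard}, with no proof given, so there is nothing internal to compare against line by line. Judged on its own, your argument is essentially the standard proof of Gauduchon's theorem, and its architecture is sound: the conformal reduction to the linear equation $i\partial\bar\partial(u\,\omega^{n-1})=0$ with $u=\phi^{n-1}$, the ellipticity of $L$, the Stokes identity $\int_M L(u)\,\omega^n=0$ forcing $L^*(1)=0$, the strong maximum principle (on $M$ connected, as implicitly assumed) giving $\ker L^*=\R$, the index-zero argument giving $\dim\ker L=1$, and the Perron--Frobenius mechanism producing a positive generator whose eigenvalue is killed by the integral identity. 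You also correctly identified the positivity of the kernel generator as the one non-routine step.

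There is, however, a sign slip at exactly that step, and as literally written it fails. Since $L(u)\,\omega^n/n!=i\partial\bar\partial(u\,\omega^{n-1})$, the principal part of $L$ is a \emph{positive} multiple of $\mathrm{tr}_\omega(i\partial\bar\partial u)$ (in normal coordinates $\sum_j\partial_{z_j}\partial_{\bar z_j}u$), so the real parts of the spectrum of $L$ are bounded above, not below: adding a large positive constant does not make $L+\lambda_0$ invertible (already for $L=d^2/dx^2$ on a circle, $L+\lambda_0$ is singular whenever $\lambda_0$ is an eigenvalue $k^2$, and its inverse is not positivity-preserving otherwise). The maximum principle requires the zeroth-order coefficient to have sign \emph{opposite} to the principal part, so the compact, strongly positive operator to which Krein--Rutman applies is $T=(\lambda_0-L)^{-1}$ for $\lambda_0$ large. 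With that replacement, $T\phi=r(T)\phi$ with $\phi>0$ gives $L\phi=\bigl(\lambda_0-1/r(T)\bigr)\phi$, and your integral identity forces the eigenvalue to vanish exactly as you say; the rest of your argument then goes through. For comparison, Gauduchon's original route reaches positivity without Krein--Rutman: knowing $\ker L^*=\R$, one has ${\rm Im}\,L^*=(\R f_0)^{\perp}$ where $f_0$ spans $\ker L$, the strong maximum principle shows ${\rm Im}\,L^*$ contains no nonzero nonnegative function, so $f_0$ cannot change sign (else a nonnegative bump function orthogonal to $f_0$ would exist), and Harnack upgrades $f_0\ge 0$ to $f_0>0$. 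Both routes rest on the same maximum-principle input; yours packages it through spectral theory, the original unwinds it by hand.
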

In higher dimensions  the existence of an SKT structure is not always guaranteed. For instance, SKT metrics cannot exist  on  non-K\"ahler  twistor spaces of  compact, anti-self-dual Riemannian manifolds \cite{verb}. 

Examples of SKT manifolds are provided by $6$-dimensional nilmanifolds in view of the following
\begin{theorem}[Fino-Parton-Salamon \cite{finosalamonparton}] \label{finosalamonparton}
Let $M=\Gamma\backslash G$ be a $6$-dimensional nilmanifold  endowed with an
invariant complex structure $J$. Then the SKT condition is satisfied by either
all invariant Hermitian metrics $g$ or by none. Indeed, $(M, J)$  admits a  SKT metric  if and
only if  the lie algebra $\frak g$ of $G$  has a basis $(\alpha^i)$ of $(1,0)$-forms such that
\begin{equation}
\begin{cases}
d\alpha^1=0\\
d\alpha^2=0\\
d\alpha^3=A\alpha^{\bar 12}+
B\alpha^{\bar 22}+C\alpha^{1\bar 1}+D\alpha^{1\bar 2}+ E\alpha^{12}
\end{cases}
\end{equation} 
where $A,B,C,D,E$ are
complex numbers satisfying the condition 
\begin{equation*}
|A|^2+|D|^2+|E|^2+2{\rm Re}(\bar BC)=0.
\end{equation*}
\end{theorem}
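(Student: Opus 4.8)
The plan is to reduce the statement to a computation on the Lie algebra $\mathfrak g$ and to read off the SKT condition from the single term of $\partial\bar\partial\omega$ that survives.

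First I would pass from an arbitrary SKT metric to an invariant one. Averaging the fundamental form $\omega$ of any SKT metric over $M=\Gamma\backslash G$ against the $G$-invariant volume yields an invariant $(1,1)$-form $\tilde\omega$, still positive because it is an average of positive forms, hence the fundamental form of an invariant Hermitian metric. Since $J$ is invariant this symmetrization commutes with $d$, and therefore with $\partial$ and $\bar\partial$, so $\partial\bar\partial\omega=0$ forces $\partial\bar\partial\tilde\omega=0$. Thus $(M,J)$ admits an SKT metric if and only if it admits an invariant one, and it suffices to study invariant $(1,1)$-forms on $\mathfrak g$.

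Next I would normalise $J$ by Salamon's structure theorem for $6$-dimensional nilpotent Lie algebras: there is a $(1,0)$-coframe with $d\alpha^1=0$, $d\alpha^2=\varepsilon\,\alpha^{1\bar1}$ for some $\varepsilon\in\{0,1\}$, and $d\alpha^3$ a $2$-form in $\alpha^1,\alpha^2,\bar\alpha^1,\bar\alpha^2$ whose $(0,2)$-part vanishes by integrability; this leaves exactly the five coefficients $A,B,C,D,E$, with $E\,\alpha^{12}=\partial\alpha^3$ and $A\,\alpha^{\bar12}+B\,\alpha^{\bar22}+C\,\alpha^{1\bar1}+D\,\alpha^{1\bar2}=\bar\partial\alpha^3$. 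For the sufficiency I take $\varepsilon=0$ — the displayed equations — write the general invariant metric $\omega=i\sum_{j,k}g_{j\bar k}\,\alpha^j\wedge\bar\alpha^k$ with $(g_{j\bar k})$ positive Hermitian, and compute $\partial\bar\partial\omega$. The decisive point is that $\alpha^1,\alpha^2$ and their conjugates are $d$-closed and so pass inertly through $\partial$ and $\bar\partial$: every summand of $\omega$ whose two slots carry indices in $\{1,2\}$, and every mixed summand $g_{1\bar3},g_{2\bar3},g_{3\bar1},g_{3\bar2}$, is killed by $\partial\bar\partial$, leaving $\partial\bar\partial\omega=i\,g_{3\bar3}\,\partial\bar\partial(\alpha^3\wedge\bar\alpha^3)$.

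It then remains to evaluate $\partial\bar\partial(\alpha^3\wedge\bar\alpha^3)$. As $d\alpha^3$ and $d\bar\alpha^3$ lie in the span of $\alpha^1,\alpha^2,\bar\alpha^1,\bar\alpha^2$, the outcome is a multiple of the unique $(2,2)$-form $\alpha^{12\bar 1\bar 2}$; only the contributions $\bar\partial\alpha^3\wedge\partial\bar\alpha^3$ (whose nonzero wedge products are of type $\alpha^{1\bar1}\wedge\alpha^{2\bar2}$ and $\alpha^{1\bar2}\wedge\alpha^{2\bar1}$) and $-\partial\alpha^3\wedge\bar\partial\bar\alpha^3$ survive, and they assemble into
$$\partial\bar\partial(\alpha^3\wedge\bar\alpha^3)=-\bigl(|A|^2+|D|^2+|E|^2+2\,{\rm Re}(\bar BC)\bigr)\,\alpha^{12\bar 1\bar 2}.$$
Since $g_{3\bar3}>0$ for every Hermitian metric, $\partial\bar\partial\omega$ vanishes exactly when $|A|^2+|D|^2+|E|^2+2\,{\rm Re}(\bar BC)=0$, a condition on the structure constants alone; this at once yields the ``all invariant metrics or none'' dichotomy and the sufficiency of the criterion. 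For the necessity I would still have to exclude $\varepsilon=1$: there the Jacobi identity already forces $B=0$, and computing $\partial\bar\partial\omega$ in the same manner I would show it is a nonzero multiple of $\alpha^{12\bar 1\bar 2}$ for every invariant $\omega$, so that an SKT metric forces two closed $(1,0)$-forms and hence the normal form $\varepsilon=0$. I expect this last case, together with the sign-and-conjugation bookkeeping required to land precisely on $2\,{\rm Re}(\bar BC)$ rather than on some other Hermitian pairing, to be the main obstacle.
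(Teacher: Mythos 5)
The paper does not actually prove this theorem---it quotes it from Fino--Parton--Salamon---so your argument has to stand on its own. Most of it does: the averaging reduction to invariant metrics, the passage to a Salamon coframe, and the $\varepsilon=0$ computation are all correct. Indeed, exactly as you say, every summand of $\omega$ other than $g_{3\bar 3}\,\alpha^{3\bar 3}$ is annihilated by $\partial\bar\partial$, and
\[
\partial\bar\partial(\alpha^{3\bar 3})=\bar\partial\alpha^3\wedge\partial\bar\alpha^3-\partial\alpha^3\wedge\bar\partial\bar\alpha^3=-\bigl(|A|^2+|D|^2+|E|^2+2\,\mathrm{Re}(\bar B C)\bigr)\,\alpha^{12\bar 1\bar 2},
\]
which yields both the dichotomy and the stated criterion whenever $d\alpha^1=d\alpha^2=0$.

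The genuine gap is your treatment of $\varepsilon=1$. The claim that in that case $\partial\bar\partial\omega$ is a \emph{nonzero} multiple of $\alpha^{12\bar 1\bar 2}$ for every invariant $\omega$ is false. Carrying out your own computation with $d\alpha^2=\alpha^{1\bar 1}$ and $B=0$ (note that $\alpha^{12},\alpha^{1\bar 1},\alpha^{1\bar 2},\alpha^{2\bar 1}$ are still closed, and the extra contributions from $\alpha^{2\bar 2}$, $\alpha^{2\bar 3}$, $\alpha^{3\bar 2}$ still die under $\partial\bar\partial$) gives
\[
\partial\bar\partial\omega=-i\,g_{3\bar 3}\bigl(|A|^2+|D|^2+|E|^2\bigr)\,\alpha^{12\bar 1\bar 2},
\]
which vanishes identically as soon as $A=D=E=0$. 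A concrete counterexample to your claim is $\mathfrak{h}_3\oplus\R^3$ with the coframe $d\alpha^1=0$, $d\alpha^2=\alpha^{1\bar 1}$, $d\alpha^3=0$: this is an $\varepsilon=1$ Salamon coframe, yet every invariant metric on it is SKT. So $\varepsilon=1$ cannot be excluded; what your computation actually proves is that SKT together with $\varepsilon=1$ forces $A=D=E=0$, i.e. $d\alpha^3=C\,\alpha^{1\bar 1}$. Then $\alpha^3-C\alpha^2$ is closed, and the reordered coframe $\alpha^1,\ \alpha^3-C\alpha^2,\ \alpha^2$ satisfies the theorem's structure equations with new constants $A=B=D=E=0$, $C=1$, which do satisfy $|A|^2+|D|^2+|E|^2+2\,\mathrm{Re}(\bar B C)=0$. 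With this repair the necessity direction closes, and the same displayed formula also supplies the all-or-none dichotomy in the $\varepsilon=1$ case, which your text otherwise leaves resting on the false claim.
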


More in general,  by  \cite{EFV}  if a nilmanifold $M$ endowed with an invariant complex structure $J$ admits an SKT metric, then $M$  is at most 2-step. As a consequence  a  classification of  8-dimensional nilmanifolds endowed with an invariant complex structure admitting an SKT metric is given  in \cite{EFV}.

Other examples of SKT metrics on compact manifolds are given by the connected sum of products of spheres in view of the following  

\begin{theorem}[Grantcharov-Grantcharov-Poon \cite{GGP}] 
For any positive integer $k \geq 1$, the manifold $M_k=(k - 1)(S^2 \times S^4)\sharp k(S^3 \times S^3)$
admits an SKT structure.
\end{theorem}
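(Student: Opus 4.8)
The plan is to realize each $M_k$ as the total space of a principal $\T^2$-bundle over a compact K\"ahler surface and to equip it with the complex structure and Hermitian metric furnished by the generalized Calabi--Eckmann construction, thereby reducing the SKT condition to a pointwise identity on the curvature of the bundle. Concretely, I would take the rational surface $N=(\mathbb{CP}^1\times\mathbb{CP}^1)\,\sharp\,(k-1)\overline{\mathbb{CP}^2}$, which is K\"ahler and satisfies $b_2(N)=k+1$, together with two primitive integral classes $e_1,e_2\in H^{1,1}(N)\cap H^2(N;\Z)$, and form the principal $\T^2$-bundle $\pi\colon M\to N$ with these Chern classes. Iterating the Gysin sequence over the two circle factors shows that, for an independent primitive choice of $e_1,e_2$, the space $M$ is simply connected with $b_2(M)=k-1$ and $b_3(M)=2k$, matching the model. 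Picking connection $1$-forms $\theta_1,\theta_2$ with $d\theta_j=\pi^*\omega_j$ for closed real $(1,1)$-representatives $\omega_j$ of $e_j$, the complex form $\theta=\theta_1+i\theta_2$ together with the pullback of a holomorphic coframe on $N$ defines an integrable $J$ on $M$; integrability is exactly the statement that $\Omega:=\omega_1+i\omega_2$ has no $(0,2)$-part, so that $\partial\theta=0$ and $\bar\partial\theta=\pi^*\Omega$.

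Next I would introduce the natural compatible metric with fundamental form $\omega=\pi^*\omega_N+\theta_1\wedge\theta_2$, where $\omega_N$ is a K\"ahler form on $N$; since $\theta_1\wedge\theta_2=\tfrac{i}{2}\,\theta\wedge\bar\theta$, the form $\omega$ is a genuine positive $(1,1)$-form for any choice of $\omega_1,\omega_2$. A direct computation, using $d\omega_N=0$, $\bar\partial\theta=\pi^*\Omega$ and $\partial\bar\theta=\pi^*\bar\Omega$, gives $\bar\partial\omega=\tfrac{i}{2}\,\pi^*\Omega\wedge\bar\theta$ and hence $\partial\bar\partial\omega=\tfrac{i}{2}\,\pi^*(\Omega\wedge\bar\Omega)=\tfrac{i}{2}\,\pi^*(\omega_1\wedge\omega_1+\omega_2\wedge\omega_2)$. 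Consequently $(M,J,\omega)$ is SKT precisely when $\omega_1\wedge\omega_1+\omega_2\wedge\omega_2=0$ pointwise on $N$; note this is a genuinely pointwise (not merely cohomological) constraint on the curvatures.

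To satisfy it I would exploit the two rulings of $N$: composing the blow-down $N\to\mathbb{CP}^1\times\mathbb{CP}^1$ with the two projections yields fibrations $f_1,f_2\colon N\to\mathbb{CP}^1$, and taking $\omega_j=f_j^*(\text{Fubini--Study area form})$ gives closed $(1,1)$-forms with $\omega_j\wedge\omega_j=0$ pointwise (each being pulled back from a curve). Hence $\omega_1\wedge\omega_1+\omega_2\wedge\omega_2=0$ identically and the SKT condition holds; for $k=1$ this is literally $N=\mathbb{CP}^1\times\mathbb{CP}^1$ with its two factor area forms, recovering the Calabi--Eckmann SKT structure on $S^3\times S^3$. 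With the analytic step thus made essentially automatic, the step I expect to be the main obstacle is the \emph{topological} identification: one must verify that the $\Z$-coefficient diffeomorphism type of $M$ is exactly $(k-1)(S^2\times S^4)\,\sharp\,k(S^3\times S^3)$ and not merely a space with the same Betti numbers. I would settle this by appealing to the classification of simply-connected $6$-manifolds (Wall--Jupp--Zhubr), checking that the cohomology ring, the second Stiefel--Whitney class and the Pontryagin/characteristic data of $M$ agree with the model. The key mechanism making the ring match is that $\pi^*\colon H^4(N)\to H^4(M)$ vanishes (it already factors through $H^4$ of the intermediate circle bundle, which is zero), so all cup products of pulled-back degree-two classes die in $H^4(M)$, reproducing the vanishing of $H^2\otimes H^2\to H^4$ characteristic of the $S^2\times S^4$ and $S^3\times S^3$ summands.
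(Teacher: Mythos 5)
The survey you were given does not actually prove this theorem; it quotes it from Grantcharov--Grantcharov--Poon [GGP], and your overall strategy --- realizing $M_k$ as a principal $\T^2$-bundle over a rational surface, computing $\partial\bar\partial\omega$ for $\omega=\pi^*\omega_N+\theta_1\wedge\theta_2$, and identifying the diffeomorphism type via the classification of simply connected $6$-manifolds --- is exactly the framework of [GGP]. Your analytic step is correct: one indeed gets $\partial\bar\partial\omega=\pi^*\bigl(\partial\bar\partial\omega_N+\tfrac{i}{2}(\omega_1\wedge\omega_1+\omega_2\wedge\omega_2)\bigr)$, and your Gysin computations and the vanishing of cup products via $H^4$ of the intermediate circle bundle are also right. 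The gap is precisely in the step you postponed: the Wall--Jupp--Zhubr check \emph{fails} for your data, for every $k\ge 2$. The model $M_k=(k-1)(S^2\times S^4)\sharp k(S^3\times S^3)$ is spin, whereas your total space is not: since $TM\cong\pi^*TN\oplus\R^2$, one has $w_2(M)=\pi^*w_2(N)$, and the kernel of $\pi^*$ on $H^2(\,\cdot\,;\Z_2)$ is exactly $\langle \bar e_1,\bar e_2\rangle=\langle \bar h_1,\bar h_2\rangle$ (iterated Gysin with $\Z_2$-coefficients). On $N=(\C P^1\times\C P^1)\sharp(k-1)\overline{\C P^2}$ one has $w_2(N)\equiv c_1(N)\equiv E_1+\dots+E_{k-1}\pmod 2$, which does not lie in $\langle\bar h_1,\bar h_2\rangle$ when $k\ge2$; hence $w_2(M)\neq 0$ and $M\not\cong M_k$ (only your case $k=1$, the Calabi--Eckmann structure on $S^3\times S^3$, survives).

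Moreover the failure is structural, not an unlucky choice of classes. Your mechanism forces both curvature forms to be pullbacks from curves, hence $e_1^2=e_2^2=0$. Spin-ness of $M$ requires some $c=ae_1+be_2$ ($a,b\in\{0,1\}$) to be a characteristic element of the intersection lattice of $N$; then $c^2=2ab\,e_1\cdot e_2$ is even, while van der Blij's lemma gives $c^2\equiv\sigma(N)\pmod 8$, and for a simply connected K\"ahler surface with $b_2=k+1$ one has $\sigma(N)=1-k+4p_g(N)$, which is odd whenever $k$ is even. So for even $k$ no K\"ahler base and no pair of fibration-pullback forms can ever produce $M_k$: the pointwise condition $\omega_1\wedge\omega_1+\omega_2\wedge\omega_2=0$ realized your way is incompatible with the spin condition. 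The repair --- and this is essentially how [GGP] proceed --- is to give up the K\"ahlerness of the base metric: by your own formula, $\omega$ is SKT iff $i\partial\bar\partial\omega_N=\tfrac12(\omega_1\wedge\omega_1+\omega_2\wedge\omega_2)$, and on a compact complex surface this equation has a \emph{positive} solution $\omega_N$ (take a Gauduchon metric $\omega_G$, solve the scalar elliptic equation obtained from the ansatz $\omega_N=(C+\phi)\,\omega_G$, and choose $C\gg 0$) precisely when $\int_N(\omega_1^2+\omega_2^2)=e_1^2+e_2^2=0$. This converts your pointwise constraint into a purely cohomological one and frees the choice of classes: on $\C P^2\sharp k\overline{\C P^2}$, where $h^{2,0}=0$ so every integral class admits a closed real $(1,1)$-representative, one can take $e_1$ characteristic and $e_2$ primitive with $e_2^2=-e_1^2$ and $(e_1,e_2)$ spanning a direct summand. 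Then $w_2(M)=0$, $p_1(M)=0$, the cubic form vanishes by your argument, and the classification really does give $M\cong M_k$.
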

Moreover, examples of SKT manifolds can be constructed via complex  blow-up construction, as shown by the following

\begin{theorem}[Fino-Tomassini \cite{FTadv}]  The complex  blow-up of an SKT manifold M at a point or along a compact complex submanifold admits an SKT metric.
\end{theorem}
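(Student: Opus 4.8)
The plan is to adapt the classical argument showing that the blow-up of a K\"ahler manifold is again K\"ahler, with the key observation that the only property of the fundamental form needed in the conclusion is its $\partial\bar\partial$-closedness, and this survives the whole construction. Write $\omega$ for the fundamental form of the given SKT metric, so that $\partial\bar\partial\omega=0$, and let $\pi\colon\widetilde M\to M$ be the blow-up along the centre (a point $p$, or a compact complex submanifold $Y$), with exceptional divisor $E=\pi^{-1}(Y)$. Because $\pi$ is holomorphic, $\pi^*$ commutes with $\partial$ and $\bar\partial$, so $\pi^*\omega$ is a real $(1,1)$-form with $\partial\bar\partial(\pi^*\omega)=\pi^*(\partial\bar\partial\omega)=0$. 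Moreover $\pi^*\omega$ is semi-positive: it is strictly positive off $E$, where $\pi$ is a biholomorphism, while at a point of $E$ its kernel is exactly the vertical tangent space of the fibration $E\to Y$, i.e.\ the directions collapsed by $\pi$. The strategy is thus to restore positivity only along these vertical directions, by a correction that does not disturb the $\partial\bar\partial$-closedness.

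Accordingly, I would first produce a smooth real $(1,1)$-form $\eta$ on $\widetilde M$ that is $d$-closed and positive definite along the fibres of $E\to Y$. The natural choice is (minus) a Chern curvature form of the line bundle $\mathcal O_{\widetilde M}(E)$ of the exceptional divisor: fixing a Hermitian metric on $\mathcal O_{\widetilde M}(E)$, its curvature $\Theta$ is a global smooth $d$-closed $(1,1)$-form representing $c_1(\mathcal O_{\widetilde M}(E))$, and since the adjunction formula gives $\mathcal O_{\widetilde M}(E)|_E=N_{E/\widetilde M}$, whose restriction to each fibre $\mathbb P\subset E$ is $\mathcal O(-1)$, the class $-c_1$ is fibrewise positive. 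The metric can therefore be chosen so that $\eta:=-\Theta$ is positive in the vertical directions along $E$ (this is just the statement that $\mathcal O_{\widetilde M}(-E)$ is relatively ample for $\pi$). Being a curvature form, $\eta$ is $d$-closed, hence in particular $\partial\bar\partial\eta=0$; this is the single feature of the K\"ahler construction that we really need.

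Set $\widetilde\omega=\pi^*\omega+\varepsilon\,\eta$. Then $\partial\bar\partial\widetilde\omega=\partial\bar\partial(\pi^*\omega)+\varepsilon\,\partial\bar\partial\eta=0$ for every $\varepsilon$, so the SKT equation is automatic, and the whole content of the proof is to show that $\widetilde\omega$ is positive definite once $\varepsilon>0$ is small. Outside a fixed neighbourhood of $E$ the form $\pi^*\omega$ is uniformly positive, so $\pi^*\omega+\varepsilon\eta>0$ there as soon as $\varepsilon$ is small. Near $E$ I would split the tangent bundle into the vertical distribution $V$ (tangent to the fibres) and a horizontal complement $H$: on $H$ the form $\pi^*\omega$ is bounded below while $\varepsilon\eta$ is a small perturbation, and on $V$ the form $\pi^*\omega$ vanishes along $E$ while $\varepsilon\eta$ contributes a positive $O(\varepsilon)$ term. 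I expect the main obstacle to be precisely this local positivity estimate: one must control the mixed $V$--$H$ terms of $\pi^*\omega$, whose degeneracy along $E$ has to be balanced against the $O(\varepsilon)$ vertical positivity coming from $\eta$. This is cleanest in the explicit local model---the blow-up of a polydisc along a coordinate subspace, where $\omega$ is compared with the Euclidean form and $\eta$ with the pulled-back Fubini--Study form---and for the blow-up along a submanifold the estimate must be carried uniformly over $Y$ along the projective bundle $E=\mathbb P(N_{Y/M})$. Once positivity is secured, $\widetilde\omega$ is the fundamental form of an SKT metric on $\widetilde M$, which is the assertion.
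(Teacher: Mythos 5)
The survey quotes this theorem from Fino--Tomassini \cite{FTadv} without reproducing its proof, so the comparison must be made with the original argument in \cite{FTadv}; measured against that, your proposal is correct and follows essentially the same route. The proof there is precisely the classical K\"ahler blow-up construction you describe: one sets $\widetilde\omega=\pi^*\omega+\varepsilon\,\eta$ with $\eta$ a $d$-closed real $(1,1)$-form positive along the fibres of $E$ (obtained from a Hermitian metric on $\mathcal{O}_{\widetilde M}(E)$, or equivalently from a cut-off Fubini--Study potential in the point case), the one new observation being exactly the one you isolate --- that $\pi^*$ commutes with $\partial$ and $\bar\partial$ so $\partial\bar\partial(\pi^*\omega)=0$, and that the closed correction term cannot destroy this --- while the positivity of $\widetilde\omega$ for small $\varepsilon>0$ is the same compactness estimate as in the K\"ahler case.
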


 Recently, Cavalcanti in \cite{Cavalcanti}  used generalized complex geometry to   study  SKT manifolds  and more generally manifolds with special holonomy with respect to a metric
connection with closed skew-symmetric torsion.  He  developed Hodge theory on such manifolds showing
how the reduction of the holonomy group causes a decomposition of the twisted cohomology.  In particular, he proved that the only Calabi-Eckman manifolds  admitting an SKT structures are $S^1 \times S^1, S^1 \times S^3$ and $S^3 \times S^3$.

\section{Geometric flows of Hermitian metrics}
In \cite{Cao} Cao obtained a new proof of the Calabi-Yau theorem via the Ricci flow. A key observation in  his paper is that  for a compact K\"ahler manifold $(M, J, g_o)$,  the solution $g(t)$ to the Ricci flow 
\begin{equation}\label{RF}
\partial_tg(t)=-{\rm Rc}(g(t))\,,\quad g(0)=g_o
\end{equation}
is still K\"ahlerian for every $t$ where it is defined. Moreover,  \eqref{RF} can be regarded asa flow of  $2$-forms by identifying a K\"ahler metric with its fundamental form and the Ricci tensor with the Ricci form, i.e. 
$$
\partial_t\omega(t)=-\rho(\omega(t))\,,\quad \omega(0)=\omega_o\,.
$$
It turns out that, in contrast to the Riemannian case, in the K\"ahler setting equation \eqref{RF} is parabolic in a strong sense and then the short-time existence is  ensured by the standard parabolic theory (see e.g. \cite{Aubin}). 
 Moreover,  Cao proved the following result which implies the statement of the Calabi-Yau theorem. 
\begin{theorem}[Cao \cite{Cao}]  Let   $(M, J, g_o)$  be a compact K\"ahler manifold  and let $T$ be a representative of $2\pi c_1(M,J)$.  Then the maximal solution  to the {\em K\"ahler-Ricci flow} 
$$
\partial_t\omega(t)=-\rho(\omega(t))+T\,,\quad \omega(0)=\omega_o
$$
is defined  for evert $t  \in [0,\infty)$ and converges to the K\"ahler form of a K\"ahler metric having $T$ as Ricci form.  
\end{theorem}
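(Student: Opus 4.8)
The plan is to reduce the flow to a scalar parabolic complex Monge--Amp\`ere equation and then to establish uniform a priori estimates of every order, from which both long-time existence and convergence follow by standard arguments.

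First I would observe that the flow preserves the K\"ahler class. Since the Ricci form $\rho(\omega(t))$ represents $2\pi c_1(M,J)$ for every $t$, and $T$ is chosen in the same class, the right-hand side $-\rho(\omega(t))+T$ is exact; hence $[\omega(t)]=[\omega_o]$ is constant and one may write $\omega(t)=\omega_o+i\partial\bar\partial\phi(t)$ for a time-dependent potential $\phi$. Using the identity $\rho(\omega)=\rho(\omega_o)-i\partial\bar\partial\log(\omega^n/\omega_o^n)$ and writing $\rho(\omega_o)=T+i\partial\bar\partial F$ (possible by the $\partial\bar\partial$-lemma on the compact K\"ahler manifold $M$), the flow becomes $i\partial\bar\partial\dot\phi=i\partial\bar\partial(\log(\omega^n/\omega_o^n)-F)$. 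Absorbing the resulting additive time-dependent constant by a reparametrisation of $\phi$, this reduces to
$$
\partial_t\phi=\log\frac{(\omega_o+i\partial\bar\partial\phi)^n}{\omega_o^n}-F,\qquad \phi(0)=0.
$$
The linearisation of the right-hand side is the complex Laplacian $\Delta_{\omega(t)}$, so the equation is strictly parabolic and short-time existence is guaranteed by the standard parabolic theory.

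Next I would derive the a priori estimates. Differentiating the equation in $t$ gives $\partial_t\dot\phi=\Delta_{\omega(t)}\dot\phi$, so $\dot\phi$ obeys a heat equation and the maximum principle yields a bound on $\|\dot\phi\|_{C^0}$ uniform in $t$. This turns the equation into a Monge--Amp\`ere equation $(\omega_o+i\partial\bar\partial\phi)^n=e^{\dot\phi+F}\omega_o^n$ with uniformly bounded right-hand side, and Yau's zeroth-order estimate (via Moser iteration) then bounds $\|\phi\|_{C^0}$. The crucial second-order estimate follows by applying the maximum principle to a quantity of the form $e^{-A\phi}\,\mathrm{tr}_{\omega_o}\omega$, using the bisectional curvature of the fixed background metric $\omega_o$; this shows that $\omega(t)$ is uniformly equivalent to $\omega_o$, so the flow is uniformly parabolic. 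With uniform ellipticity in hand, the parabolic Evans--Krylov theorem gives a uniform $C^{2,\alpha}$ bound and Schauder bootstrapping yields uniform $C^\infty$ bounds, all independent of $t$.

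Finally, long-time existence follows from a continuation argument: a finite-time singularity is excluded because the uniform higher-order estimates allow the solution to be extended past any finite time, so the flow is defined on $[0,\infty)$. For convergence I would exploit again the heat equation $\partial_t\dot\phi=\Delta_{\omega(t)}\dot\phi$: the uniform metric equivalence provides a uniform spectral gap, so the oscillation of $\dot\phi$ decays exponentially and $\dot\phi\to 0$. Consequently $\phi(t)$ converges in $C^\infty$ to a limit potential $\phi_\infty$, and the limit metric $\omega_\infty=\omega_o+i\partial\bar\partial\phi_\infty$ satisfies $\rho(\omega_\infty)=T$, i.e. it is K\"ahler with the prescribed Ricci form $T$. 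The main obstacle throughout is the chain of a priori estimates --- particularly the $C^0$ and the Laplacian bounds --- which constitute the analytic heart of the Calabi--Yau theorem.
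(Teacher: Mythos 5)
The paper offers no proof of this statement: it is quoted as Cao's theorem, with a citation to \cite{Cao}, purely as motivation for the Hermitian curvature and pluriclosed flows studied afterwards. Your proposal can therefore only be measured against Cao's original argument, and it reproduces that argument faithfully: reduction of the flow to a scalar parabolic complex Monge--Amp\`ere equation via the $\partial\bar\partial$-lemma, the maximum-principle bound on $\dot\phi$ coming from $\partial_t\dot\phi=\Delta_{\omega(t)}\dot\phi$, Yau's zeroth- and second-order estimates, higher-order regularity (Cao used Calabi's third-order estimate where you invoke parabolic Evans--Krylov; both routes work), continuation past any finite time, and convergence through decay of $\dot\phi$.

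Two steps are stated more loosely than they can be executed. First, Yau's Moser-iteration argument bounds the oscillation $\sup_M\phi-\inf_M\phi$, not $\|\phi\|_{C^0}$: along the flow the potential could a priori drift by a time-dependent constant. This is harmless, since the metric and the test quantity $e^{-A\phi}\,\mathrm{tr}_{\omega_o}\omega$ see $\phi$ only modulo constants, but one should either pass to the mean-normalized potential or phrase the estimates in terms of the oscillation. Second, a ``uniform spectral gap'' is not literally available because the Laplacian $\Delta_{\omega(t)}$ is time-dependent; the correct tool (and Cao's, via Li--Yau) is a Harnack inequality for heat equations with uniformly controlled coefficients, which yields exponential decay of $\mathrm{osc}\,\dot\phi$. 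One must then invoke the normalization $\int_M e^{\dot\phi+F}\,\omega_o^n=\int_M\omega_o^n$ --- valid because $[\omega(t)]$ is fixed and $F$ can be chosen with $\int_M e^{F}\omega_o^n=\int_M\omega_o^n$ --- to conclude that the spatial average of $\dot\phi$ also tends to zero, so that $\dot\phi\to 0$ rather than to a nonzero constant; only then does $\phi$ converge and the limit satisfy $\omega_\infty^n=e^{F}\omega_o^n$, equivalently $\rho(\omega_\infty)=T$. With these standard repairs your outline is a complete and correct proof, and it is essentially the proof in the cited source.
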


In \cite{streets-tian1} Streets and Tian introduced a generalization of the K\"ahler-Ricci flow to the the Hermitian case with torsion.  The basic idea in \cite{streets-tian1} is to use the Chern connection to construct a parabolic flow of Hermitian metrics instead of the Levi-Civita connection. We briefly describe the {\em Hermitian curvature flow} introduced in \cite{streets-tian1}.
Let $(M,J,g)$ be an Hermitian manifold with Chern connection $\nabla^c$. Let $R^c$ be the curvature of $\nabla^c$ and $S(g)$ be the $(1,1)$-tensor  given  by 
$$
S(g)_{k\bar r}:=g^{\bar j i}R^c_{i\bar jk\bar r}\,. 
$$
Then $g\mapsto S(g)$ defines an operator from the cone of Hermitian metrics on $(M,J)$ to $J$-invariant symmetric $2$-tensors on $M$ which is, in view of \cite{streets-tian1}, a quasi linear {\em elliptic} operator of the  second order. As such once an initial Hermitian metric $g_o$ is  fixed,  the geometric flow 
$$
\partial_t(g(t))=-S(g(t))\,,\quad g(0)=g_o
$$
has always a short-time solution. Furthermore, by adding to $-S(g)$ a tensor $Q(g)$ quadratic in the torsion of $\nabla^c$,  the modified curvature flow 
\begin{equation}\label{HCF}
\partial_t(g(t))=-S(g(t))+Q(g(t))\,,\quad g(0)=g_o
\end{equation}
turns out to be the gradient flow of the  functional 
$$
\mathbb{F}(g):=\frac{\int_M \left(s^c-\frac14\,|T^c|^2-\frac12 |w|^2\right)\,dV}{(\int_M dV)^{\frac{n-1}{n}}}
$$
acting on the space of the Hermitian metrics. Here   $dV=\frac{1}{n!}\omega^n$ is the volume form induced by the fundamental form $\omega$ of $g$, $T^c$ is the torsion of $\nabla^c$, $w$ is the $1$-form $w_i:=(T^c)_{ik}^{k}$ and the norms are computed with respect to the metric $g$. 
In \cite{streets-tian2} it is proved that the Hermitian curvature flow preserves the SKT condition in the sense that when the initial metric $g_o$ is SKT, then the solution $g(t)$ to 
\eqref{HCF} holds SKT for every $t$. Moreover in the SKT case the Hermitian curvature flow, regarded as a flow of $2$-forms,  reduces to 
\begin{equation}\label{PCF}
\partial_t\omega(t)=-[\rho^B(\omega(t))]^{1,1}\,,\quad \omega(0)=\omega_o\,,
\end{equation}
$\rho^B$ being the curvature form of the Bismut connection and the superscript $(1,1)$ denoting the projection onto $\Lambda^{1,1}$. 

\begin{rem}
{\em In \cite{luigiflow} it is observed that the Hermitian curvature flow can be generalised to the almost-Hermitian case by adding an extra term to the definition of $Q$. Such a generalisation does not preserve the almost-K\"ahler condition and a curvature flow preserving almost-K\"ahler structures is provided in \cite{streets-tian3}. The definition of this last flow, called {\em symplectic curvature flow}, is highly non trivial and the almost complex structure evolves with the metric. The symplectic curvature flow has been generalised to the almost-Hermitian non-symplectic case in \cite{smith,dai}}. 
\end{rem}

\subsection{Solutions to the pluriclosed flow on homogeneous spaces} 
 The behaviour of the solutions to the pluriclosed flow on nilmanifolds is analysed in \cite{EFV2}.  The idea in \cite{EFV2} consists in adapting the argument used by Lauret in \cite{lauret} to study the Ricci flow regarding the pluriclosed flow in the invariant case as a flow of brackets instead of invariant metrics.

Let $M=\Gamma\backslash G$ be a nilmanifold endowed with an invariant SKT structure $(J,\omega_o)$ and let $\omega(t)$ be the maximal solution to the pluriclosed flow \eqref{PCF}. 
Since \eqref{PCF} is invariant by biholomorphisms and $\omega_o$ is invariant, then $\omega(t)$ is still invariant for every $t$ and \eqref{PCF} reduces to an ODE.  Therefore the maximal solution is defined for $t\in (-\epsilon,T)$, for some $\epsilon,T>0$. Moreover, $\omega(t)$ can be regarded as a flow of metrics on the Lie algebra $(\g,\mu_o)$ of $G$ (here $\mu_o$  denotes the Lie bracket). This remark allows us to work in an algebraic fashion. Fix a $(1,0)$-basis of the Lie algebra $(\mathfrak g,J)$ and denote by 
$\mu_{AB}^C$ the components of the bracket $\mu_o$ (the capital letters run into the index set $\{1,\dots,n,\bar 1,\dots,\bar n\}$).  Then a direct computation yields that the component of the $(1,1)$-part of the Bismut Ricci  form $\rho^B$ of an Hermitian metric on $(\g,J)$ reads,  in terms of the basis of $\g$,  as 
\begin{equation}\label{rho1}
\rho^B_{i\bar j}=\,- \mu_{i\bar j }^a\mu_{ar}^{r}+ \mu_{i\bar j }^ag^{\bk r}\mu_{r\bk}^{\bar l} g_{a\bar l}
                 +\mu_{i\bar j }^{\bar b}\mu_{\bar b \bar r}^{\bar r}+\mu_{i\bar j }^{\bar b}g^{k \br}\mu_{k \bar r }^{ l} g_{l\bar b}
\end{equation}
and  \eqref{PCF} can be written as
\begin{equation}\label{flowSTalgebra}
\begin{cases}
\frac{d}{dt}g_{i\bar j}=\,\, \mu_{i\bar j }^a\mu_{ar}^{r}-\mu_{i\bar j }^ag^{\bk r}\mu_{r\bk}^{\bar l} g_{a\bar l}
                 -\mu_{i\bar j }^{\bar b}\mu_{\bar b \bar r}^{\bar r}-\mu_{i\bar j }^{\bar b}g^{k \br}\mu_{k \bar r }^{ l} g_{l\bar b}\\
                 g_{i\bar j}(0)=(g_0)_{i\bar j}\,.
\end{cases}
\end{equation}

We can identify $\g$ with $\R^{2n}$, $(J,\omega_o)$ with the standard Hermitian structure in $\R^{2n}$ and $\mu_o$ with a Lie  bracket in $\R^{2n}$. Since  by hypothesis  $J$ is a complex structure, its Nijenhuis  tensor vanishes and therefore 
\b\label{intwithmu}
\mu_o(JX,JY)=\mu_o(JX,Y)+\mu_o(X,JY)+\mu_o(X,Y),
\e
for every $X,Y\in \R^{2n}$. On the other hand, $\omega(t)$ solves the ODE
\begin{equation}\label{PCFalg}
\frac{d}{dt}\omega=-(\rho^B_{\mu_o})^{1,1}\,,\quad \omega(0)=\omega_o,
\end{equation}
where given a Lie  bracket $\mu$ satisfying \eqref{intwithmu}, $\rho^B_{\mu}$ is the skew-symmetric form 
$$
\rho^B_{\mu}(X,Y)=i \sum_{r=1}^{n}\Big(\langle\mu(\mu(X,Y),Z_r),Z_{\br}\rangle-\langle\mu(Z_r,Z_{\br}),\mu(X,Y)\rangle\Big)\,,
$$
$\{Z_r\}$ is the standard unitary basis on $(\R^{2n},J_0)$, $\langle\cdot,\cdot\rangle$  is the standard Euclidean metric and the superscript $(1,1)$ denotes the projection onto $\Lambda^{1,1}$.   Given a Lie bracket $\mu$ satisfying \eqref{intwithmu} we denote by $P_\mu$ the complex isomorphism
$$
\omega_o(P_{\mu}X,Y)=(\rho^{B}_{\mu})^{1,1}(\omega)\,.
$$
induced by $\mu$ and $\omega_o$. Then we consider the flow of  nilpotent  Lie brackets 
\begin{equation}\label{bracketflow}
\frac{d}{dt}\mu=\frac12\, \delta_{\mu}(P_{\mu})\,,\quad \mu(0)=\mu_o
\end{equation}
where  we set 
$$
\delta_{\mu}(\alpha)=\mu(\alpha\cdot,\cdot)+\mu(\cdot,\alpha\cdot)-\alpha\mu(\cdot,\cdot)\,,
$$
for a given  endomorphism $\alpha$. The following results hold

\begin{theorem}[Enrietti-Fino-Vezzoni, \cite{EFV2}]\label{EFVflow}
The bracket flow \eqref{bracketflow} has always a solution $\mu(t)$ for $t\in (-\epsilon,\infty)$ converging to the trivial bracket on $\R^{2n}$. Moreover all the brackets  $\mu(t)'$s have the same center and there exists a solution  $h(t)\in {\rm GL}(n,\C)$, for  $t\in (-\epsilon,\infty)$, to the the flow 
$$
\frac{d}{dt}h(t)=-P_{\mu(t)}h(t)\,,\quad h(0)=h_o\,. 
$$ 
Finally $\omega(t)=h(t)^*(\omega_o)$ solves \eqref{PCFalg}. 
\end{theorem}
\begin{cor}
Let $(M=\Gamma\backslash G,J,\omega_0)$ be a {\em nilmanifold}  endowed with an invariant SKT structure. Then the maximal solution $\omega(t)$ to the pluriclosed flow is defined in $(-\epsilon,\infty)$, where $\epsilon$ is a suitable positive real  number.
\end{cor}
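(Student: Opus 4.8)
The plan is to reduce the pluriclosed flow on $M$ to the algebraic ODE \eqref{PCFalg} and then invoke Theorem \ref{EFVflow}. First I would exploit the fact that the flow \eqref{PCF} is invariant under biholomorphisms. Each element of $G$ acts on $M$ as a biholomorphism fixing $(J,\omega_o)$, so applying such an automorphism to the maximal solution produces another solution with the same initial datum; by uniqueness of the maximal solution to the (parabolic) pluriclosed flow, the two must agree. Hence $\omega(t)$ is invariant for every $t$ in its maximal interval, is therefore determined by its value on the Lie algebra $\g$, and \eqref{PCF} reduces to the ODE obtained by restricting the flow to invariant forms. Because the SKT condition is preserved along \eqref{PCF} (by \cite{streets-tian2}), the solution stays in the cone of invariant SKT metrics and the flow keeps the algebraic shape \eqref{PCFalg} throughout.

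Next I would identify $(\g,J,\omega_o)$ with $(\R^{2n},J_0,\omega_o)$ and the bracket of $\g$ with a Lie bracket $\mu_o$ satisfying the integrability relation \eqref{intwithmu}, so that the invariant pluriclosed flow becomes exactly the ODE \eqref{PCFalg}. At this point Theorem \ref{EFVflow} applies directly: the associated bracket flow \eqref{bracketflow} admits a solution $\mu(t)$ defined for $t\in(-\epsilon,\infty)$, together with a gauge curve $h(t)\in{\rm GL}(n,\C)$ on the same interval, and the form $\omega(t)=h(t)^{*}(\omega_o)$ solves \eqref{PCFalg}.

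It then remains to transport this conclusion back to $M$. Since $h(t)$ lies in ${\rm GL}(n,\C)$ for every $t\in(-\epsilon,\infty)$, the pulled-back form $\omega(t)=h(t)^{*}(\omega_o)$ is a positive-definite $(1,1)$-form, hence a genuine invariant Hermitian (indeed SKT) metric, for all such $t$; in particular the forward solution never degenerates and is immortal. By the reduction carried out in the first step, this invariant solution \emph{is} the maximal solution of the pluriclosed flow on $M$, which is therefore defined on $(-\epsilon,\infty)$, as claimed.

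The substantive content is entirely packaged inside Theorem \ref{EFVflow}, so for the corollary the only genuine points to verify are the passage from the PDE to the invariant ODE (which needs uniqueness of the parabolic flow together with its invariance under the $G$-action) and the observation that $h(t)\in{\rm GL}(n,\C)$ keeps $\omega(t)$ a nondegenerate metric for all positive time. The main obstacle, were one not permitted to cite Theorem \ref{EFVflow}, would be establishing the long-time existence of the bracket flow \eqref{bracketflow} and controlling the gauge $h(t)$ so that it stays inside ${\rm GL}(n,\C)$ for all $t\ge 0$; granting the theorem, both facts are immediate.
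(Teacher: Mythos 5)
Your proposal takes the same route the paper does: the corollary carries no separate proof there precisely because it is immediate from the reduction of \eqref{PCF} to the ODE \eqref{PCFalg} carried out just before Theorem \ref{EFVflow}, together with the theorem itself; your application of the theorem and your observation that $\omega(t)=h(t)^*(\omega_o)$ stays a positive $(1,1)$-form because $h(t)\in{\rm GL}(n,\C)$ commutes with $J$ are exactly the intended steps.

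However, your justification of the reduction step contains a concrete error. You assert that ``each element of $G$ acts on $M$ as a biholomorphism fixing $(J,\omega_o)$''. This is false: a left translation $L_g$ descends to $\Gamma\backslash G$ only when $g$ normalizes $\Gamma$, while right translations, which do descend and act transitively on $M$, act on left-invariant tensors through $\mathrm{Ad}$ and therefore do not fix $(J,\omega_o)$ when $G$ is non-abelian. So there is no transitive group of biholomorphisms of $(M,J,\omega_o)$ available to feed into your uniqueness argument. The standard repair runs in the opposite direction: the Bismut Ricci form of an invariant metric is itself invariant, so the finite-dimensional ODE \eqref{flowSTalgebra} on invariant Hermitian metrics produces an honest solution of the PDE \eqref{PCF} on the compact manifold $M$; by uniqueness of solutions of the pluriclosed flow on compact manifolds (Streets--Tian), the maximal solution with invariant initial datum coincides with this invariant solution wherever both are defined, and a continuity argument at the endpoint (invariance is a closed condition, so the limiting metric is invariant and positive definite, allowing the ODE to be restarted) shows the two existence intervals agree. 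Alternatively one can lift to the universal cover $G$, where left translations genuinely preserve $(J,\omega_o)$, but then uniqueness must be invoked on a non-compact manifold within a bounded-geometry class, a nontrivial analytic input that should not be passed over silently. With this step repaired, the rest of your argument is correct and coincides with the paper's proof.
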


\begin{ex}[The solution on the Kodaira-Thurston manifold]{\em 
In dimension $4$ the unique nilpotent Lie algebra (up to isomorphisms) carrying an SKT structure
is  $\mathfrak{h}_3 \oplus\R$, where $\mathfrak{h}_3$ is the  Lie algebra of the 3-dimensional real Heisenberg Lie group $H_3 (\R)$  given by
$$
 H_3 (\R) = \left \{  \left (    \begin{array}{ccc} 1&x&z\\ 0&1&y\\ 0&0&1  \end{array}  \right ), \quad x, y, z \in \R  \right \}.
$$
The {\em Kodaira-Thurston surface} is the compact quotient of the  simply-connected Lie group $H_3 (\R) \times \R$ by the lattice $\Gamma\times \mathbb Z $, where $\Gamma$ is the lattice in $H_3 (\R)$ whose elements  are matrices with integer entries.
The Lie algebra  $\g=\mathfrak h_3\oplus\R$  has structure equations $(0,0,0,12)$, where with this notation we mean that there exists a basis of 1-forms $\{ e^i \}$ such that
$$
d e^i = 0,\,\,\, i = 1,2,3, \quad de^4 = e^1 \wedge e^2.
$$
 Let $J$ be the complex structure on $\g$  given by
$$
Je_1=-e_2,\,\quad  Je_3=-e_4\,.
$$
Then
$$
Z_1=\frac12\, (e_1+ie_2)\,,\quad Z_2=\frac12\,(e_3+ie_4)
$$
is a complex basis of type $(1,0)$ of $(\g,J)$. Let $\{\zeta^1,\zeta^2\}$ be  its dual frame.
Every Hermitian inner product  $g$ on $(\g,J)$ can be written as
$$
g=x\zeta^{1}\zeta^{\bar 1}+y\zeta^{2}\zeta^{\bar 2}+z\zeta^{1}\zeta^{\bar 2}+\bar z \zeta^{2}\zeta^{\bar 1}\,.
$$
where $x,y\in \R$, $z\in \C$ satisfy $xy-|z|^2>0$ and  it is SKT.
Since
$$
\mu(Z_1,Z_{\bar 1})=-\frac12(Z_2-Z_{\bar 2})
$$
is the only non-vanishing bracket, the Bismut Ricci form $\rho^B$ of $g$ has component only along $\zeta^{1\bar 1}$, i.e. 
$
\rho^{B}=-i\rho^{B}_{1\bar 1}\, \zeta^{1\bar 1}
$
and a direct computation yields
$$
\rho^{B}_{1\bar 1}= -\frac{y^2}{2\left(xy-|z|^2\right)}\,.
$$
Therefore in this case the pluriclosed flow with initial condition $\omega_o=-i\zeta^{1\bar 1}-i\zeta^{2\bar2 }$ reduces to
\begin{equation}
\label{KT}
\dot{x}=\frac{y^2}{2\left(xy-|z|^2\right)}\,,\quad y\equiv 0\,,\quad z\equiv 0\,,\quad 
x(0)=1
\end{equation}
and its maximal solution is  
$$
\omega(t)=-i\sqrt{t+1}\,\zeta^{1\bar 1}-i\zeta^{2\bar 2}
$$
for $t\in (-1,\infty)$.
From the viewpoint of the bracket flow, the initial bracket takes the following expression 
$$
\mu_0=-\frac12\, \zeta^1\wedge\zeta^{\bar 1}\otimes Z_2+\frac12\, \zeta^1\wedge\zeta^{\bar 1}\otimes Z_{\bar 2}\,.
$$
Since the bracket flow  preserves the center, we look for a solution $\mu$ to \eqref{bracketflow}  taking value only at $(Z_1,Z_{\bar 1})$, i.e.
$$
\mu=\mu_{1\bar 1}^2\, \zeta^1\wedge\zeta^{\bar 1}\otimes Z_2+\mu_{1\bar 1}^{\bar 2}\, \zeta^1\wedge\zeta^{\bar 1}\otimes Z_{\bar 2}\,.
$$
For such a bracket we have
$$
\rho^B_{\mu}=-2i\,|\mu_{1\bar 1}^2|^2\, \zeta^1\wedge\zeta^{\bar 1}
$$
and
$$
P_{\mu}=-2\,|\mu_{1\bar 1}^2|^2\, \zeta^1\otimes Z_{ 1}+2\,|\mu_{1\bar 1}^2|^2\, \zeta^{\bar 1}\otimes Z_{\bar 1}\,.
$$
Therefore
$$
\delta_{\mu}(P_{\mu})(Z_1,Z_{\bar 1})=2\mu(P_{\mu}(Z_1),Z_{\bar 1})=-4 |\mu_{1\bar 1}^2|^2 \mu(Z_1,Z_{\bar 1})
$$
and the corresponding bracket flow equation is
\begin{equation}\label{z}
\dot{z}=-2 |z|^2\,z\,,\quad 
z(0)=-\frac12
\end{equation}
where $z=\mu_{1\bar 1}^2$.  Since \eqref{z} has as solution the real function
$$
z(t)=-\frac{1}{2\sqrt{t+1}}
$$
the solution to the bracket flow is 
$$
\mu(t)=-\frac{1}{2\sqrt{t+1}}\, \zeta^1\wedge\zeta^{\bar 1}\otimes Z_2-\frac{1}{2\sqrt{t+1}}\, \zeta^1\wedge\zeta^{\bar 1}\otimes Z_{\bar 2}\,.
$$
which is defined in $(-1,\infty)$ and converges to $0$ for $t\rightarrow \infty$, accordingly to Theorem \ref{EFVflow}. 
}
\end{ex}

In \cite{lauret3} Lauret describes a general approach to study curvature flows on almost Hermitian Lie groups. This description includes many flows of almost Hermitian  structures studied in the last years. 

For homogeneous complex surfaces Boling proves the following two results about the solutions to the pluriclosed flow.
\begin{theorem}[Boling \cite{boling}]
Let $\omega(t)$ be a locally homogeneous solution of the pluriclosed flow on a
compact
complex surface which exists on a maximal time interval $[0,T)$. 
If $T<\infty$ then the complex surface is rational or ruled. If $T=\infty$ and
the manifold is a Hopf surface, the evolving metric converges exponentially fast
to a canonical form unique up to homothety. Otherwise, there is a blowdown limit
\[\tilde g_\infty(t)=\underset{s\rightarrow\infty}\lim s^{-1}\tilde g(st)\] of
the
induced metric on the universal cover which is an expanding soliton in the sense
that $\tilde g(t) = t \tilde g(1)$ up to automorphism.
\end{theorem}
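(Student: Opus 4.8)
The plan is to turn the flow into a finite-dimensional dynamical system and then analyse it case by case along the classification of locally homogeneous complex surfaces. First I would pass to the universal cover $\tilde M$, where local homogeneity means that a Lie group $G$ acts transitively by biholomorphic isometries. Since the pluriclosed flow \eqref{PCF} is invariant under biholomorphic isometries and the initial datum is homogeneous, the solution $\tilde\omega(t)$ remains homogeneous for all $t$, so exactly as in the nilmanifold case of Theorem \ref{EFVflow} the flow reduces to an ODE on the finite-dimensional cone of $G$-invariant pluriclosed metrics; equivalently, following Lauret, to a bracket flow on the associated Lie algebra. A useful structural remark here is that in complex dimension two the Gauduchon theorem quoted above makes every Gauduchon metric pluriclosed, so the relevant cone is precisely the space of invariant Gauduchon metrics and the flow is well posed on it.

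Second, I would enumerate the compact complex surfaces carrying such structures. Using the Kodaira--Enriques classification together with the requirement of local homogeneity, the candidates split into the K\"ahler models (complex tori, hyperelliptic surfaces, and the homogeneous rational or ruled surfaces) and the non-K\"ahler models (primary and secondary Kodaira surfaces, the homogeneous Hopf surfaces, and the Inoue surfaces), surfaces of general type being excluded since they admit no homogeneous geometry. For each model the invariant pluriclosed metrics form an explicit low-dimensional family, so \eqref{PCF} becomes a concrete ODE whose coefficients I would read off from the structure constants exactly as in the Kodaira--Thurston computation presented above, using the algebraic form \eqref{rho1} of $\rho^B$.

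Third, I would determine the maximal existence time and the asymptotics. The governing principle is that the Aeppli class of $\omega(t)$ evolves essentially linearly, $[\omega(t)]=[\omega_o]-t\,[\rho^B]$, and $T$ is the last time this class stays inside the cone of pluriclosed classes; a finite-time singularity can therefore occur only when there is genuine positivity forcing the volume of an invariant cycle to collapse, which through the classification happens precisely for the surfaces of Kodaira dimension $-\infty$, i.e.\ the rational or ruled ones, yielding the first assertion. In the remaining cases the ODE is defined for all $t\in[0,\infty)$, and I would split off the Hopf surfaces: there the invariant Gauduchon metrics essentially form a one-parameter family and the reduced equation is a scalar ODE that I expect to converge exponentially to its unique fixed point, giving the canonical metric up to homothety. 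For the other infinite-time models no finite-scale fixed point exists, so I would instead perform the parabolic rescaling $\tilde g_\infty(t)=\lim_{s\to\infty}s^{-1}\tilde g(st)$ and show, via the bracket-flow description, that the rescaled brackets converge to a limit satisfying $\tilde g(t)=t\,\tilde g(1)$ up to automorphism, i.e.\ an expanding soliton.

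The main obstacle will be this last step: establishing that the blowdown limit in the non-Hopf infinite-time cases genuinely exists and is a \emph{non-degenerate} expanding soliton rather than a collapsed or degenerate object. This requires uniform control on the rescaled bracket flow and the identification of its limiting bracket with a soliton fixed point, together with the delicate matching of the purely analytic finite-versus-infinite-time dichotomy to the complex-geometric classification into rational/ruled surfaces on the one hand and tori, Kodaira, Inoue and Hopf surfaces on the other.
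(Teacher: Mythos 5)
First, a point of order: the paper you were given does not prove this statement at all --- it is quoted verbatim from Boling's paper \cite{boling} as a survey item --- so your proposal can only be measured against what a proof actually requires, namely Boling's own argument: a case-by-case reduction of the pluriclosed flow to explicit ODEs through the classification of compact complex surfaces carrying locally homogeneous geometric structures. Your skeleton (pass to the universal cover, use invariance of the flow to reduce to a finite-dimensional ODE or bracket flow in the style of Lauret and of Theorem \ref{EFVflow}, then run through the classification) is the right one and agrees in spirit with that argument; the observation that the Gauduchon condition coincides with pluriclosedness in complex dimension two is also correct and relevant.

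However, there are genuine gaps. \emph{(1)} Your classification step is wrong at a decisive point: surfaces of general type are \emph{not} excluded by local homogeneity --- compact quotients of the complex $2$-ball and of the product of two discs are locally homogeneous and of general type, and properly elliptic surfaces are likewise locally homogeneous; indeed Boling's companion theorem, quoted immediately after this one in the paper, treats exactly these surfaces in its cases 3 and 4. Dropping them deletes precisely the cases in which the blowdown/expanding-soliton assertion is the substance of the theorem. \emph{(2)} Your finite-time dichotomy --- singularity if and only if Kodaira dimension $-\infty$, ``i.e.\ rational or ruled'' --- conflates the K\"ahler and non-K\"ahler categories: Hopf and Inoue surfaces also have Kodaira dimension $-\infty$ yet admit long-time solutions, so the equivalence you lean on is false as stated. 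Moreover, the principle that $T$ is the exit time of the Aeppli class from the positive cone is, outside the K\"ahler case (where the pluriclosed flow is the K\"ahler--Ricci flow and this is a theorem), a conjecture of Streets--Tian rather than an available tool; on the homogeneous non-K\"ahler models one must instead establish long-time existence directly from the ODE, exactly as is done in Theorem \ref{lte} of this paper for solvmanifolds. \emph{(3)} Finally, you yourself concede that the existence and non-degeneracy of the blowdown limit, its identification as an expanding soliton, and the exponential convergence on Hopf surfaces remain unresolved ``obstacles''; but these constitute essentially the entire analytic content of the statement. What you have is a plausible plan of attack whose case list must be corrected, not a proof.
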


\begin{theorem}[Boling \cite{boling}]
Let $\omega(t)$ be a locally homogeneous solution of pluriclosed flow on a
compact
complex surface $(M,J)$ which exists on the interval $[0,\infty)$ and suppose
that $(M,J)$ is not a Hopf surface. Let $\hat{\omega}(t)=\frac{\omega(t)}{t}$. Then 
\begin{enumerate}
\item[1.]If the surface is a torus, hyperelliptic, or a Kodaira surface, then the family
$(M,\hat{g}(t))$ converges as $t\rightarrow\infty$ to a point in the
Gromov-Hausdorff sense.
\item[2.] If the surface is an Inoue surface, then the family $(M,\hat{\omega}(t))$
converges as $t\rightarrow\infty$ to a circle in the Gromov-Hausdorff sense and
moreover the length of this circle depends only on the complex structure of the
surface.
\item[3.] If the surface is a properly elliptic surface where the genus of the base
curve is at least 2, then the family $(M,\hat{g}(t))$ converges as
$t\rightarrow\infty$ to the base curve with a metric of constant curvature.
\item[4.] If the surface is of general type, then the family $(M,\hat{\omega}(t))$ converges
as $t\rightarrow\infty$ to a product of K\"ahler-Einstein metrics on $M$.
\end{enumerate}
\end{theorem}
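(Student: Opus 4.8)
The plan is to combine the Enriques--Kodaira classification of compact complex surfaces with the geometrization of locally homogeneous complex surfaces, in order to reduce the statement to a finite list of model geometries, and then to analyse the pluriclosed flow on each model by turning it into a finite-dimensional dynamical system. A locally homogeneous complex surface $(M,J)$ is a compact quotient of a homogeneous model $G/H$, and under the hypotheses (flow defined on $[0,\infty)$ and $(M,J)$ not a Hopf surface) the list of possible $(M,J)$ is exactly the one appearing in the statement: complex tori and hyperelliptic surfaces, primary and secondary Kodaira surfaces, Inoue surfaces, properly elliptic surfaces fibred over a curve of genus $\geq 2$, and surfaces of general type. First I would record this dictionary, so that the four items of the theorem correspond to the Kodaira dimension and to the Lie-theoretic type of the homogeneous model.

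On each model the invariance of \eqref{PCF} under biholomorphisms forces a locally homogeneous datum to evolve through locally homogeneous metrics, so the flow reduces to an ODE on the finite-dimensional space of invariant Hermitian metrics, exactly as in the reduction leading to \eqref{flowSTalgebra} and to the bracket flow \eqref{bracketflow}. I would then invoke the first result of Boling quoted above: since $T=\infty$ and $(M,J)$ is not a Hopf surface, the rescaled universal-cover metric converges to an expanding soliton with $\tilde g(t)=t\,\tilde g(1)$ up to automorphism. Consequently $\hat g(t)=g(t)/t$ is, up to the automorphism group, asymptotic to a single fixed model metric, and the problem becomes the computation of the Gromov--Hausdorff limit of the rescaled family determined by this soliton together with the action of the lattice.

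The two K\"ahler cases are then handled by existing convergence theory. For a surface of general type (item 4) the model is K\"ahler, the pluriclosed flow coincides with the unnormalised K\"ahler--Ricci flow, and the normalised limit is the K\"ahler--Einstein metric of negative scalar curvature produced by Aubin--Yau; on the locally symmetric models (such as quotients of a product of half-planes) this assembles into the stated product of K\"ahler--Einstein metrics. For a properly elliptic surface with base of genus $\geq 2$ (item 3) the same K\"ahler--Ricci flow analysis controls the base, while the fibre directions grow strictly slower than $t$ and hence collapse after rescaling, the surviving base metric being the constant-curvature metric on the genus $\geq 2$ curve. The remaining items are non-K\"ahler and require the explicit homogeneous ODE. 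For tori, hyperelliptic and Kodaira surfaces (item 1) the model is flat or nilpotent, and, as the Kodaira--Thurston computation in the excerpt already shows, the invariant metric grows at most like $\sqrt{t}$ in every direction, so $\hat g(t)=g(t)/t$ shrinks uniformly to zero and $(M,\hat g(t))$ collapses to a point. For an Inoue surface (item 2) the model is a solvable Lie group, and here one must read off from the eigenstructure of the monodromy which directions grow linearly and which stay bounded: exactly one direction survives the rescaling, producing the collapsed circle, whose length is extracted from the soliton and therefore depends only on the monodromy eigenvalues, i.e. only on $J$.

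Finally I would make the Gromov--Hausdorff statements precise. The mechanism in every case is the same: rescaling by $t$ retains the directions of linear growth and collapses the rest, and for a locally homogeneous space the limit is the quotient of the surviving subspace by the image of the lattice. This can be justified either through diameter and injectivity-radius estimates exhibiting collapse with locally bounded geometry, or directly from the explicit asymptotics of the invariant metric obtained from the reduced ODE. The main obstacle I expect is the Inoue case: one must simultaneously control a collapse to a one-dimensional limit and identify the precise length of the limiting circle as a biholomorphic invariant, which demands a careful analysis of the solvable structure and of the action of the lattice on the expanding soliton. The K\"ahler items, by contrast, are essentially consequences of known K\"ahler--Ricci flow convergence results, so the genuinely new work is concentrated in items 1 and 2.
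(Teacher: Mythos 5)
First, a point of comparison that matters for this exercise: the paper you were given does not prove this statement at all. It is a survey, and the theorem is quoted verbatim, with attribution, from Boling's paper \cite{boling}; no argument for it appears anywhere in the text. So the only question is whether your proposal stands on its own as a proof, and as written it does not.

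Your outline identifies the right skeleton --- the classification of locally homogeneous compact complex surfaces, the reduction of the pluriclosed flow to an ODE on invariant metrics (as in \eqref{flowSTalgebra}), and a case-by-case asymptotic analysis --- and this is indeed the shape of the argument in \cite{boling}. But every point where the theorem has actual content is deferred. For item 1, the claim that the invariant metric grows at most like $\sqrt{t}$ ``in every direction'' is asserted by analogy with the single Kodaira--Thurston computation reproduced in the paper; for a secondary Kodaira surface or a hyperelliptic surface you would have to set up and solve, or at least bound, the corresponding ODE, which you do not do. For item 2, the entire content --- that exactly one direction grows linearly and that the limiting circle length is a biholomorphism invariant --- is precisely the ``careful analysis of the solvable structure'' that you yourself flag as the main obstacle; naming an obstacle is not overcoming it. There is also a genuine logical gap in your use of Boling's first theorem: an expanding-soliton blowdown limit of the universal-cover metric $\tilde g$ does not by itself control Gromov--Hausdorff convergence of the compact quotients $(M,\hat g(t))$, since the latter is a statement about diameters under the lattice action; in each case this requires explicit diameter estimates, which again presuppose the solved ODEs. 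Finally, in item 3 the assertion that the fibre directions grow strictly slower than $t$, and in item 4 the identification of the homogeneous general-type models as products (so that the limit is a product of K\"ahler--Einstein metrics), both need the classification and the reduction carried out explicitly rather than invoked. In short, your proposal is a correct plan of attack, essentially the one Boling follows, but it is a plan: the analytic core of all four items is missing.
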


In \cite{lauret2} Lauret studies the Ricci flow on homogeneous manifolds using the bracket flow argument. We think  that an analogue approach could  also give insights for the pluriclosed flow. This will be the subject of a future work. 

\subsection{The pluriclosed flow on solvmanifolds with holomorphically trivial canonical bundle}
The aim of this section is to prove the following  
\begin{theorem}\label{lte}
Let $(M=\Gamma\backslash G,J)$ be a $6$-dimensional solvmanifold endowed with
an invariant complex structure $J$ having
holomorphically trivial canonical bundle. Then the pluriclosed flow has a long time solution for every invariant initial 
datum $g_o$.  
\end{theorem}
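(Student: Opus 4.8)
The plan is to use the invariance of the flow to reduce the PDE \eqref{PCF} to a finite–dimensional ODE, and then to analyse that ODE case by case using the classification of \cite{FU}. First I would note that the pluriclosed flow is invariant under the biholomorphisms of $(M,J)$ coming from left translations of $G$; since $g_o$ is invariant, uniqueness of solutions forces $\omega(t)$ to remain invariant as long as it exists. Hence the flow descends to a flow of Hermitian inner products on $(\g,J)$, i.e. to the ODE \eqref{flowSTalgebra} on the open convex cone $\mathcal C$ of positive-definite invariant Hermitian forms, whose right-hand side is built from the structure constants $\mu$ (subject to the integrability condition \eqref{intwithmu}) via the Bismut–Ricci form \eqref{rho1}. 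Short-time existence is then automatic from the theory of ODEs, so the whole content of the theorem is that the maximal solution does not leave $\mathcal C$ in finite forward time: the only ways immortality can fail are a blow-up $g_{i\bar j}\to\infty$ or a degeneration $\det(g_{i\bar j})\to 0$ at a finite $t$. The problem thus becomes that of proving a priori bounds keeping $g(t)$ inside $\mathcal C$ for all $t\ge 0$.

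Second, I would invoke the classification in \cite{FU} of the $6$-dimensional Lie algebras $\g$ carrying an invariant complex structure $J$ with holomorphically trivial canonical bundle, together with their explicit structure equations in a $(1,0)$-coframe. This splits the problem into finitely many cases. The nilpotent algebras are already covered: by the corollary to Theorem \ref{EFVflow} the bracket-flow argument of \cite{EFV2} yields an immortal solution on any nilmanifold with an invariant SKT structure. I would therefore concentrate on the genuinely solvable (non-nilpotent) algebras in the list, recalling that compactness of $\Gamma\backslash G$ forces $\g$ to be unimodular. Since the pluriclosed flow lives on pluriclosed metrics, in each such case I would first isolate the invariant metrics satisfying $\partial\bar\partial\omega=0$ — a set of algebraic conditions on the $g_{i\bar j}$ read off from the structure equations — and observe, in analogy with Theorem \ref{finosalamonparton}, that the SKT condition constrains the admissible invariant metrics in a uniform way. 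Substituting the corresponding brackets into \eqref{rho1} then expresses $(\rho^B)^{1,1}$ as an explicit rational function of the $g_{i\bar j}$, turning \eqref{PCF} into a small explicit ODE system exactly as in the Kodaira–Thurston computation \eqref{KT}.

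Third, I would integrate or estimate these systems. Guided by the example \eqref{KT}, where $x(t)=\sqrt{t+1}$, the expectation is that the evolving components either stay constant or grow like a positive power of $t$, so no blow-up occurs and $\det g$ stays bounded away from $0$; where closed-form integration is unavailable I would instead establish monotonicity of $\det(g_{i\bar j})$ (or of the individual eigenvalues) along the flow to preclude finite-time degeneration, together with an at-most-linear-in-$t$ bound on $\mathrm{tr}\,g$ to preclude blow-up. Feeding the two estimates into the standard ODE escape-time criterion gives existence on $[0,\infty)$ in every case, which proves the theorem.

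The genuine obstacle I expect lies in the third step for those solvable cases in which $\rho^B$ carries off-diagonal components: there \eqref{flowSTalgebra} is a coupled nonlinear system on the full cone $\mathcal C$ rather than the essentially scalar equation of the Kodaira–Thurston example, and the delicate point is the a priori positivity estimate $\det(g_{i\bar j})(t)>0$ for all finite $t$. A clean route around this would be to choose, within each isomorphism class of \cite{FU}, a basis adapted to the SKT constraints in which the flow \emph{decouples} — for instance one in which the off-diagonal entries are preserved — so that every case reduces to a scalar equation of the type \eqref{KT} that is manifestly immortal. Verifying that such an adapted basis exists in all the solvable cases is where the main computational work would lie.
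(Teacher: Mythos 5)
Your plan follows the same route as the paper's proof: invariance reduces the flow to the ODE \eqref{flowSTalgebra}, the nilpotent case is already settled by the corollary to Theorem \ref{EFVflow}, and the non-nilpotent case is handled through the classification of \cite{FU} by case-by-case monotonicity estimates, with a contradiction against a finite maximal time. So the skeleton is right, but two of your specific expectations diverge from what actually happens. First, you need not scan every solvable algebra in the classification and isolate its SKT metrics: since the pluriclosed flow only makes sense for pluriclosed initial data, \cite[Theorem 4.1]{FU} reduces the non-nilpotent case at once to the two algebras $\mathfrak{g}_2^0$ and $\mathfrak{g}_4$, each carrying an essentially unique complex structure, and in both cases the invariant SKT metrics are exactly those with $g_{1\bar 2}=0$; every other algebra in the list is a vacuous case. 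Second, your guiding guess (modeled on \eqref{KT}) that the evolving entries stay constant or grow like a power of $t$ is wrong in the decisive cases, and the ``decoupling basis'' you flag as the way around the coupled system is unnecessary. For $\mathfrak{g}_2^0$ only the off-diagonal entries $v=g_{1\bar 3}$ and $z=g_{2\bar 3}$ evolve, and one gets $\frac{d}{dt}|v|^2\le 0$ and $\frac{d}{dt}|z|^2\le 0$ directly on the coupled system, because both equations share the denominator $g_{1\bar 1}|z|^2+g_{2\bar 2}|v|^2-g_{1\bar 1}g_{2\bar 2}g_{3\bar 3}=-\det(g_{k\bar r})<0$; for $\mathfrak{g}_4$ in addition $x=g_{1\bar 1}$ increases with $\dot x$ decreasing. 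These monotonicities --- which are precisely your hedged fallback, not your primary expectation --- show that $g(t)$ converges to a positive-definite invariant tensor as $t\to T$ for any putative finite maximal time $T$, so the solution extends beyond $T$, a contradiction. Carrying out exactly these computations is the entire content of the paper's proof; your outline would likely arrive at them, but as written it defers the step where the proof actually lives.
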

\begin{proof}  If $G$ is non nilpotent,  by \cite[Theorem 4.1]{FU}
$(M,J)$ has an SKT metric if and only if   the Lie algebra $\frak g$ of $G$  is either isomorphic to $\frak g_2^0=(e^{25},-e^{15}, e^{45},-e^{35}, 0, 0),$ or $\frak g_4=(e^{23},-e^{36}, e^{26},-e^{56}, e^{46}, 0)$.

The solvable Lie algebra $\frak g_2^0$ has, up to equivalence,  only  one  complex structure $J$  defined by 
the structure equations
\b\label{streqs}
d \alpha^1 = i  (\alpha^{13} + \alpha^{1 \overline 3}), \, d \alpha^2 = - i \alpha^{23} - i  \alpha^{2 \overline 3}, d \alpha^3 =0
\e
with respect to a suitable $(1,0)$-coframe $(\alpha^k)$ (see \cite[Proposition 3.3]{FU}). For the Lie algebra $\frak g_4$  by Proposition 3.6 in \cite{FU} any  complex structure $J$ with  a closed $(3,0)$-form  on $\frak g_4$  is equivalent to  one of  the  complex structure  $J_{\pm}$  given by  
\b\label{streqs2}
d \alpha^1 = i \alpha^{13} + i \alpha^{1 \overline 3}, \, d \alpha^2 = - i \alpha^{23} - i \alpha^{2 \overline 3}, \, d \alpha_3 = \pm \alpha^{1 \overline 1}.
\e
In both cases an invariant metric $g$ is SKT if and only if its component $g_{1\bar 2}$ (with respect to the coframe $(\alpha^k)$)   vanishes.  

We first prove the theorem when the Lie algebra of $G$ is $\frak g_2^0$.  
Equations \eqref{streqs} read in terms of the bracket $\mu$ and the dual frame $(Z_k)$ to $(\alpha^k)$ as 
$$
\mu_{13}=-iZ_1,\quad \mu_{1\bar 3}=-iZ_1\,,\quad \mu_{23}=iZ_2,\quad \mu_{2\bar 3}=iZ_2\,. 
$$
Therefore, given an Hermitian  metric $g$ on $(\g,J)$, the  only non-vanishing components of $(\rho^B)^{1,1}$ are
$$
\rho^B_{1\bar 3}=-i g^{\bar k r}g_{1\bar l}\mu_{r\bar k}^{\bar l}\,,\quad \rho^B_{2\bar 3}=i g^{\bar k r}g_{2\bar l}\mu_{r\bar k}^{\bar l}
$$
(and their conjugates).  In the SKT case we have $g_{1\bar 2}=0$ and the above formulas simplify to 
$$
\rho^B_{1\bar 3}=-i g^{\bar 1 3}g_{1\bar 1}\mu_{3\bar 1}^{\bar 1}= -g^{\bar 1 3}g_{1\bar 1} \,,\quad \rho^B_{2\bar 3}=i g^{\bar 2 3}g_{2\bar 2}\mu_{3\bar 2}^{\bar 2}=-g^{\bar 2 3}g_{2\bar 2}
$$

Now let $g_o$ be a fixed invariant SKT metric and let $g=g(t)$ be the maximal solution to the pluriclosed flow with initial datum $g_o$. Assume by contradiction that the time domain of $g$ is $[0,T)$, with $T<\infty$. 
The only components of $g$ which evolve with the pluriclosed flow are $g_{1\bar 3}$ and $g_{2\bar 3}$ (and their conjugates) while the other components remain constant. Denote $g_{1\bar 3}$ by $v$ and  $g_{2\bar 3}$ by $z$. Then since    
$$
g^{\bar 1 3}=-\frac{vg_{2\bar 2}}{\det(g_{k\bar r})},\quad g^{\bar 2 3}=-\frac{zg_{1\bar 1}}{\det(g_{k\bar r})}
$$
and 
$$
\det(g_{k\bar r})=g_{1\bar 1}g_{2\bar 2}g_{3\bar 3}-g_{1\bar 1}|z|^2-g_{2\bar 2}|v|^2
$$
we get 
$$
\rho^B_{1\bar 3}= \frac{vg_{1\bar 1}g_{2\bar 2}}{c-g_{1\bar 1}|z|^2+g_{2\bar 2}|v|^2}\,,
\quad \rho^B_{2\bar 3}=\frac{zg_{1\bar 1}g_{2\bar 2}}{c-g_{1\bar 1}|z|^2+g_{2\bar 2}|v|^2}
$$
where $c=g_{1\bar 1}g_{2\bar 2}g_{3\bar 3}$. 
In particular the pluriclosed flow equation reads in terms of $v$ and $z$ as 
$$
\begin{cases}
\dot v=\frac{vg_{1\bar 1}g_{2\bar 2}}{g_{1\bar 1}|z|^2+g_{2\bar 2}|v|^2-c}\\
\dot z=\frac{zg_{1\bar 1}g_{2\bar 2}}{g_{1\bar 1}|z|^2+g_{2\bar 2}|v|^2-c}\,.
\end{cases}
$$
Now a direct computation yields 
$$
\frac{d}{dt}|v|^2\leq 0\,,\quad \frac{d}{dt}|z|^2\leq 0
$$
and $|v|$, $|z|$ decrease along the flow. Hence $v$ and $z$ converge as $t\to T$ and so $g(t)$ converges to a in invariant tensor  $g(T)$ as $t\to T$. Since $|v|$ and $|z|$ decrease, $g(T)$ is still positive definite and we can extend the flow afterwards $T$, contradicting $T<\infty$. 

Now we consider the case in which the Lie algebra of 	$G$ is $\g_4$. In this case the proof goes more or less in the same way as for $\g_2^0$, but we have that to take into account that  also the component $(1,\bar 1)$ of the metric evolves. Structure equations \eqref{streqs2} read in terms of brackets as
$$
\mu_{13}=-iZ_1,\quad \mu_{1\bar 3}=-iZ_1\,,\quad \mu_{23}=iZ_2,\quad \mu_{2\bar 3}=iZ_2\,,\quad 
\mu_{1\bar1 }=\mp Z_3\pm Z_{\bar 3}
$$
and the Bismut form of a generic invariant SKT metric is given by the following relations 
$$
\rho^B_{1\bar 1}=
-2g^{\bar 11}g_{3\bar 3}, \quad 
\rho^B_{1\bar 3}=-g^{\bar 13}g_{1\bar 1}\mp ig^{\bar 11}g_{1\bar 3}, \quad 
\rho^B_{2\bar 3}=-g^{\bar 2 3}g_{2\bar 2}\pm i g^{\bar 1 1}g_{2\bar 3}.
$$
Let $g_o$ be an invariant fixed SKT metric and $g=g(t)$ be the maximal solution of the pluriclosed flow with initial condition $g_o$. Assume by contradiction that the time domain of $g$ is $[0,T)$, with $T<\infty$. 
In order simplify the notation we write $g_{1\bar 1}=x$, $g_{1\bar 3}=v$, $g_{2\bar 3}=z$. Then  
$$
g^{\bar 11}=\frac{g_{2\bar2}g_{3\bar 3}-|z|^2}{\det(g_{k\bar r})}\,,\quad 
g^{\bar 1 3}=-\frac{vg_{2\bar 2}}{\det(g_{k\bar r})},\quad g^{\bar 2 3}=-\frac{zg_{1\bar 1}}{\det(g_{k\bar r})}
$$
and 
$$
\det(g_{k\bar r})=x(g_{2\bar 2}g_{3\bar 3}-|z|^2)-g_{2\bar 2}|v|^2\,.
$$
Therefore 
$$
\begin{aligned}
&\rho^B_{1\bar 1}=-2g_{3\bar 3}\frac{g_{2\bar2}g_{3\bar 3}-|z|^2}{x(g_{2\bar 2}g_{3\bar 3}-|z|^2)-g_{2\bar 2}|v|^2}\\
&\rho^B_{1\bar 3}=v\frac{xg_{2\bar 2}\mp i(g_{2\bar2}g_{3\bar 3}-|z|^2)}{x(g_{2\bar 2}g_{3\bar 3}-|z|^2)-g_{2\bar 2}|v|^2}\\
&\rho^B_{2\bar 3}=z\frac{xg_{2\bar 2}\mp i(g_{2\bar2}g_{3\bar 3}-|z|^2)}{x(g_{2\bar 2}g_{3\bar 3}-|z|^2)-g_{2\bar 2}|v|^2}
\end{aligned}
$$
and the puriclosed flow read in terms of $x,v,z$ as 
$$\begin{aligned}
&\dot x=2g_{3\bar 3}\frac{g_{2\bar2}g_{3\bar 3}-|z|^2}{x(g_{2\bar 2}g_{3\bar 3}-|z|^2)-g_{2\bar 2}|v|^2}\\
&\dot v=-v\frac{xg_{2\bar 2}\mp i(g_{2\bar2}g_{3\bar 3}-|z|^2)}{x(g_{2\bar 2}g_{3\bar 3}-|z|^2)-g_{2\bar 2}|v|^2}\\
&\dot z=-z\frac{xg_{2\bar 2}\mp i(g_{2\bar2}g_{3\bar 3}-|z|^2)}{x(g_{2\bar 2}g_{3\bar 3}-|z|^2)-g_{2\bar 2}|v|^2}\,.
\end{aligned}
$$
Again we easily get 
$$
\frac{d}{dt}|v|^2\leq 0\,,\quad \frac{d}{dt}|z|^2\leq 0
$$
and that $x$ increase along the flow, while $\dot x$ decreases. Therefore we have that $g(t)$ converges to a metric $g(T)$ as $t\to T$  and that we can extend the solution $g(t)$ afterward $T$, contradicting $T<\infty$.  
\end{proof}

\begin{rem}{\em 
It is rather natural asking what happens in the proof of Theorem \ref{lte} when the initial metric $g_o$ takes the diagonal expression $g_o=x_o\alpha^{1\bar 1}+y_o\alpha^{2}\alpha^{\bar 2}+z_o\alpha^{3}\alpha^{\bar 3}.$ 
For $\g_2^0$ any diagonal metric is K\"ahler Ricci-flat and as such does not evolve with the pluricosed flow. In the case of $\g_4$, $g_o$ evolves as  
$$
g(t)=x(t)\alpha^{1}\alpha^{\bar1}+y_o\alpha^{2}\alpha^{\bar2}+z_o\alpha^{3}\alpha^{\bar3}
$$
where the component $x$ solves 
$$
\dot x=2 \frac{z_o}{x}
$$
and so $x$ takes the following form
$$
x=\sqrt{2x_o+4z_ot}
$$
which is defined for $t\geq -\frac{x_o}{2z_o}$. }
\end{rem}

\section{Static metrics and Hermitian-Symplectic structures}
In \cite{streets-tian2} Streets and Tian introduced the definition of {\em static metric} as a natural generalization of K\"ahler-Einstein metrics to the SKT setting.  
\begin{definition}[Streets-Tian \cite{streets-tian2}]
An SKT metric $g$ with fundamental form $\omega$ on a complex manifold $(M,J)$ is called {\em static} if $\rho^B(g)=\lambda \omega$,  for a constant $\lambda\in\R$.  
\end{definition}
An example of a non-K\"ahler compact complex manifold carrying a static metric with $\lambda=0$ is provided by the  {\em Hopf surface} $S^3 \times S^1$. Currently it is not known any example of a compact complex non-K\"ahler manifold carrying a static metric with $\lambda\neq 0$. Indeed, the existence of a static metric with $\lambda\neq 0$ imposes some restrictions; one of them is the existence of a symplectic form $\Omega$ taming the complex structure. 
More precisely, if $g$ is a static SKT metric on $(M,J)$ with $\lambda\neq 0$, then $\Omega=\frac{1}{\lambda}\rho^B$ is a symplectic form on $(M,J)$ such that 
$\Omega(X,JX)>0$, for every non-zero vector field $X$.

We recall the following 
\begin{definition}
Let $(M,J)$ be an almost complex manifold. A symplectic structure $\Omega$ on $M$ {\em tames} $J$ if 
\begin{equation*}\label{tames}
\Omega(X,JX)>0,
\end{equation*}
for every non-zero vector field $X$ on $M$. If  in addition 
$$
\Omega(JX,JY)=\Omega(X,Y),
$$ 
for every vector fields $X,Y$ on $M$, then $\Omega$ is {\em compatible} with $J$.  If $J$ is integrable and $\Omega$ is a taming symplectic form,  the pair $(\Omega,J)$ is called a {\em Hermitian-symplectic} structure. 
\end{definition}
Therefore the existence of an SKT static metric with $\lambda\neq 0$ implies the existence of an Hermitian-symplectic structure and the existence of  Hermitian-symplectic structure is an obstruction to the existence of a static metric with $\lambda\neq 0.$ 

\begin{problem}[Streets-Tian \cite{streets-tian2}]
Find examples of compact Hermitian-symplectic manifolds   non-admitting K\"ahler metrics. 
\end{problem}
About this problem there are some negative results in literature which suggest that Hermitian-symplectic structures on non-K\"ahler manifolds couldn't exist. The first of these results is about the four dimensional case.
\begin{theorem}[Li-Zhang \cite{lizhang}, Streets-Tian \cite{streets-tian2}]
If a compact complex surface admits an Hermitian-symplectic form, then it is K\"ahler. 
\end{theorem}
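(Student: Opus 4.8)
The plan is to reduce the statement to an intrinsic characterisation of K\"ahler surfaces and then to exploit the closedness of the taming form to rule out the obstructing current.

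First I would analyse the algebraic structure of the taming form. Writing $J$ for the integrable complex structure and decomposing the symplectic form into bidegrees, $\Omega=\Omega^{2,0}+\Omega^{1,1}+\Omega^{0,2}$ with $\Omega^{0,2}=\overline{\Omega^{2,0}}$, the taming condition $\Omega(X,JX)>0$ depends only on the $J$--invariant part of $\Omega$, so it is equivalent to positivity of $\omega:=\Omega^{1,1}$. Thus $\omega$ is the fundamental form of a Hermitian metric and $\psi:=\Omega^{2,0}$ is a $(2,0)$--form. Since on a surface the bundles $\Lambda^{3,0}$ and $\Lambda^{0,3}$ vanish, expanding $d\Omega=0$ into bidegrees gives only the two conjugate equations $\partial\omega=-\bar\partial\psi$ and $\bar\partial\omega=-\partial\bar\psi$. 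Applying $\bar\partial$ to the first yields $\bar\partial\partial\omega=-\bar\partial\bar\partial\psi=0$, i.e. $\partial\bar\partial\omega=0$, so $\omega$ is automatically a Gauduchon metric; moreover $(M,J)$ is K\"ahler exactly when $\partial\omega=0$, equivalently $\bar\partial\psi=0$. This reduces the theorem to showing that the closedness of $\Omega$ forces $\bar\partial\psi$ to vanish.

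Next I would invoke the Harvey--Lawson intrinsic characterisation of K\"ahler manifolds: a compact complex manifold fails to be K\"ahler if and only if it carries a nonzero positive current of bidimension $(1,1)$ that is a component of a boundary. On a surface such a current is a positive $(1,1)$--current $T=(dS)^{1,1}$ for some current $S$. Arguing by contradiction, assume $(M,J)$ is not K\"ahler and fix such a $T$. Pairing against the positive $(1,1)$--form $\omega$ gives $\langle T,\omega\rangle>0$ by positivity. On the other hand, since $T$ sees only the $(1,1)$--part of a form, $\langle T,\omega\rangle=\langle T,\Omega\rangle$, and I would like to use $d\Omega=0$ together with Stokes' formula for currents, $\langle dS,\Omega\rangle=\pm\langle S,d\Omega\rangle=0$, to force $\langle T,\omega\rangle\le 0$.

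The main obstacle is precisely the gap between $\langle dS,\Omega\rangle$ and $\langle T,\omega\rangle$: the full pairing $\langle dS,\Omega\rangle$ also contains cross terms in which the $(2,0)$-- and $(0,2)$--parts of $dS$ are paired with $\psi$ and $\bar\psi$, and these carry no a priori sign. Controlling them is the whole content of the theorem, and it reflects the difference between a taming form and a compatible (K\"ahler) one. Here I would feed back the structure equation $\partial\omega=-\bar\partial\psi$, which ties $\psi$ to $\omega$, and run Lamari's Hahn--Banach duality argument: positivity of $\omega$ supplies exactly the positive functional required, while the equation $\partial\omega=-\bar\partial\psi$ together with the Gauduchon property lets one absorb the $(2,0)+(0,2)$ contributions, producing a genuine K\"ahler class and hence a K\"ahler metric, contradicting the assumption. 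An alternative, more classification--dependent route would bypass the current estimate entirely: combine the surface dichotomy that $M$ is K\"ahler if and only if $b_1(M)$ is even (Buchdahl, Lamari) with a direct check that none of the non-K\"ahler surfaces in Kodaira's list (class VII, Kodaira surfaces, non-K\"ahler elliptic surfaces) admits a symplectic form taming its complex structure.
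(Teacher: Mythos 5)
The paper itself states this theorem as a citation to Li--Zhang and Streets--Tian and gives no proof, so the comparison below is with the arguments in those sources. Your opening reduction is the standard one and is essentially correct: taming sees only the $J$-invariant part of $\Omega$, so $\omega:=\Omega^{1,1}>0$; the bidegree expansion of $d\Omega=0$ on a surface gives $\partial\omega=-\bar\partial\psi$, whence $\partial\bar\partial\omega=0$, i.e. $\omega$ is pluriclosed (equivalently Gauduchon, since $n=2$). However, your sentence ``$(M,J)$ is K\"ahler exactly when $\partial\omega=0$'' is wrong as an equivalence, and the announced reduction ``show that $d\Omega=0$ forces $\bar\partial\psi=0$'' is a reduction to a \emph{false} statement: on a K\"ahler surface take a K\"ahler form $\kappa$ and $\Omega=\kappa+\epsilon\, d(\eta+\bar\eta)$ with $\eta$ a non-holomorphic $(1,0)$-form; for small $\epsilon$ this is Hermitian-symplectic, yet its $(2,0)$-part $\epsilon\,\partial\eta$ need not be $\bar\partial$-closed. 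The theorem asserts the existence of \emph{some} K\"ahler metric, not the closedness of this particular $\omega$.

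The serious gap is the one you flag and then wave away. Harvey--Lawson only provides a positive current $T=(dS)^{1,1}$, and the cross terms are not a nuisance that the structure equation ``absorbs'': using $\bar\partial\omega=-\partial\Omega^{0,2}$ and $\partial\omega=-\bar\partial\Omega^{2,0}$ to integrate the cross terms by parts converts $\langle (dS)^{2,0},\Omega^{0,2}\rangle$ and $\langle (dS)^{0,2},\Omega^{2,0}\rangle$ back into minus the corresponding $(1,1)$-pairings, so the identity $0=\pm\langle S,d\Omega\rangle=\langle dS,\Omega\rangle$ collapses to $0=0$ and yields no contradiction at all; no amount of massaging the Harvey--Lawson current by itself will close this. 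What the actual proofs use is a strictly stronger, surface-specific input: a compact complex surface which is not K\"ahler carries a nonzero \emph{positive $(1,1)$-current that is honestly $d$-exact}, $T=dS$ with no $(2,0)$ or $(0,2)$ components. This rests on Lamari's Lemme 3.3 and the Buchdahl--Lamari theorem ($b_1$ even $\Leftrightarrow$ K\"ahler); indeed, by Sullivan--Hahn--Banach cone duality, ``there is no nonzero positive exact $(1,1)$-current'' is \emph{equivalent} to ``a Hermitian-symplectic form exists,'' so producing this exact current is the entire content of the theorem, not a detail. Once $T=dS$ is of pure type $(1,1)$, Stokes gives $0=\pm\langle S,d\Omega\rangle=\langle T,\Omega\rangle=\langle T,\omega\rangle>0$, a contradiction with no cross terms to control. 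Your fallback via Kodaira's list hides the same gap: class VII surfaces are cheap ($b^{+}=0$, so they admit no symplectic form whatsoever), but primary Kodaira surfaces \emph{do} carry symplectic forms (Thurston's example), so ``a direct check that no symplectic form tames $J$'' there is not a verification but the theorem itself; one settles it precisely by exhibiting an exact positive $(1,1)$-form, e.g. $-e^{12}=d(-e^{4})$ in the notation of the paper's Kodaira--Thurston example.
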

In \cite{EFV} it is studied the existence of an Hermitian-symplectic structure  when $(M,J)$ is a nilmanifold with an invariant complex structure. Since nilmanifolds carry both complex and symplectic structures it is rather natural to explore the existence of an Hermitian-symplectic structure in this class of examples.  

\begin{theorem}[Enrietti-Fino-Vezzoni \cite{EFV}]\label{EFV}
An invariant complex structure $J$ on a {nilmanifold} $M$ can be tamed by a symplectic form if and only if $(M,J)$ is a complex torus.
\end{theorem}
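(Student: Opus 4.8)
The easy direction is immediate: if $(M,J)$ is a complex torus then the underlying group is abelian, $M$ carries a flat K\"ahler metric, and its fundamental form is a symplectic form \emph{compatible} with $J$, hence in particular taming it. So the content is the converse, and the plan is to show that an invariant complex structure tamed by a symplectic form forces the Lie algebra $\g$ of $G$ to be abelian.

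First I would reduce to an \emph{invariant} taming form by symmetrisation. Let $\Omega$ be any (a priori non-invariant) symplectic form taming $J$, and let $d\mu$ be the normalised $G$-invariant probability measure on $M$. Averaging $\Omega$ against left-invariant vector fields produces an invariant $2$-form on $\g$ by $\tilde\Omega(X,Y)=\int_M\Omega_x(X_x,Y_x)\,d\mu(x)$ for $X,Y\in\g$. Since the invariant frame has constant structure coefficients, symmetrisation commutes with $d$, so $\tilde\Omega$ is closed. Because $J$ is invariant we have $(JX)_x=J(X_x)$, whence $\tilde\Omega(X,JX)=\int_M\Omega_x(X_x,J(X_x))\,d\mu(x)>0$ for every $X\neq 0$; thus $\tilde\Omega$ still tames $J$ and is in particular nondegenerate. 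Hence it suffices to prove the purely Lie-algebraic statement: a nilpotent Lie algebra $\g$ carrying a complex structure $J$ and an invariant symplectic form $\Omega$ with $\Omega(X,JX)>0$ for all $X\neq0$ must be abelian.

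Assume $\g$ is not abelian and argue by contradiction. Write $\Omega=F+\Gamma+\bar\Gamma$ for its $J$-type decomposition, where $F=\Omega^{1,1}$ is the (positive) fundamental form determined by the taming condition and $\Gamma=\Omega^{2,0}$. The heart of the argument is to locate an element on which positivity and closedness collide. Consider the descending central series $\g=\g^1\supseteq\g^2=[\g,\g]\supseteq\cdots\supseteq\g^{s}\supseteq\g^{s+1}=0$ with $\g^{s}\neq0$; its last term is central and contained in $[\g,\g]$, and $s\geq2$ since $\g$ is nonabelian. Pick $0\neq X\in\g^{s}$, so that taming forces $\Omega(X,JX)>0$. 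On the other hand $X=\sum_k[A_k,B_k]$ with $A_k\in\g$ and $B_k\in\g^{s-1}$, and the Chevalley--Eilenberg identity $d\Omega=0$ evaluated on $(A_k,B_k,JX)$ yields
\[
\Omega(X,JX)=\sum_k\big(\Omega([A_k,JX],B_k)-\Omega([B_k,JX],A_k)\big).
\]
When $JX$ is again central all brackets on the right-hand side vanish and we obtain $\Omega(X,JX)=0$, contradicting taming. This already settles the case in which $\g^{s}$ is $J$-invariant, in particular the case of a nilpotent complex structure, where Salamon's adapted $(1,0)$-coframe (\cite{salamon}) makes the whole central filtration $J$-invariant.

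The main obstacle is the general invariant complex structure, for which the centre and the commutator ideal need not be $J$-invariant, so $JX$ need not be central and the correction terms $\Omega([A_k,JX],B_k)$, $\Omega([B_k,JX],A_k)$ survive; note that these are exactly the terms which prevent a naive induction, since quotienting by the central line $\langle X\rangle$ does not descend $J$ unless $JX$ is also central. To get past this I would try to produce a $J$-invariant central ideal: if the maximal complex subspace $\mathfrak z(\g)\cap J\,\mathfrak z(\g)$ of the centre is nonzero one quotients by it, checks that closedness, taming and nilpotency descend to the (lower-dimensional, complex) quotient, and closes the argument by induction on $\dim\g$. The genuinely delicate situation is when this intersection vanishes; there I would exploit the two supplementary facts available on nilmanifolds — that $\iota_X\Omega$ is a closed $1$-form for central $X$, hence annihilates $[\g,\g]$, and that nilpotent Lie algebras are unimodular, so integrals over $M$ of exact invariant top-degree forms vanish — to eliminate the surviving bracket terms after pairing with $F^{\,n-2}$. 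Making this descent compatible with $J$ for an arbitrary (possibly non-nilpotent) complex structure is the crux of the proof, and is precisely where Salamon's structure theory for complex structures on nilpotent Lie algebras is needed.
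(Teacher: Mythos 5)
Your two reductions (the torus direction and the symmetrisation to an invariant taming form) are correct, and your central-series computation is in fact the genuine endgame of the proof in \cite{EFV}: once $JX$ is known to be central, closedness of $\Omega$ forces $\Omega(X,JX)=0$ and contradicts taming. But you stop exactly at the crux, and the devices you propose in its place do not close the gap. The quotient-and-induct idea fails at the first step: a closed \emph{nondegenerate} $2$-form on $\g$ never descends to $\g/\mathfrak{a}$ for a nonzero ideal $\mathfrak{a}$, since descending would force $\mathfrak{a}$ to lie in the radical of $\Omega$, which is zero; one would need honest symplectic reduction $\mathfrak{a}^{\perp_\Omega}/\mathfrak{a}$ instead, and then neither $J$, nor the taming condition, nor nilpotency of the induced data survives in any obvious way. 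Likewise, Salamon's coframe only yields a $J$-invariant \emph{filtration} of $\g$; it does not make the terms of the descending central series $J$-invariant, so even the special case you claim to dispose of with it is not actually settled. The appeal to unimodularity and to pairing with $F^{n-2}$ is a hope, not an argument.

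The missing idea --- and it carries essentially all the difficulty, as the paper itself signals by quoting the key lemma of \cite{EFV} --- is to convert the taming form into an SKT metric and then invoke a structure theorem for SKT nilmanifolds. Writing the invariant taming form as $\Omega=\Omega^{2,0}+\Omega^{1,1}+\Omega^{0,2}$, the taming condition says exactly that $\Omega^{1,1}$ is positive, and the $(1,2)$-component of $d\Omega=0$ gives $\bar\partial\,\Omega^{1,1}=-\partial\,\Omega^{0,2}$, whence $\partial\bar\partial\,\Omega^{1,1}=0$: the $(1,1)$-part is a pluriclosed (SKT) metric on $(\g,J)$. The lemma stated in the paper then says that a nilpotent Lie algebra with an invariant complex structure admitting an SKT metric is abelian or $2$-step \emph{and $J$ preserves its center}. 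Granting this, $[\g,\g]\subseteq\mathfrak{z}(\g)$ and $J\mathfrak{z}(\g)\subseteq\mathfrak{z}(\g)$, so for every $0\neq X\in[\g,\g]$ the vector $JX$ is central and your own computation yields $\Omega(X,JX)=0$, forcing $[\g,\g]=0$. In short, your proposal is the correct shell around an empty core: the taming-implies-SKT observation and the $2$-step/$J$-invariant-center lemma, which constitute the real content of the theorem, are absent, and the substitutes you sketch would not produce them.
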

The proof of Theorem \ref{EFV} makes use of the following lemma which is interesting in itself.
\begin{lemma}
Let $(M=\Gamma\backslash G ,J,g)$ be a nilmanifold with an invariant SKT structure. Then the Lie algebra of $G$ is abelian or $2$-step and $J$ preserves its center. 
\end{lemma}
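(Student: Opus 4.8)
The plan is to reduce the statement to the Lie algebra $\g$ of $G$ and then read off both conclusions from the SKT equation written in terms of the Lie bracket. Since $(J,g)$ is invariant, the condition \eqref{SKT} may be tested on invariant forms, so it is enough to work with the nilpotent Hermitian Lie algebra $(\g,J,g)$. First I would fix a unitary $(1,0)$-coframe $(\alpha^i)$ with dual frame $(Z_i)$, so that $\omega=i\sum_i\alpha^i\wedge\alpha^{\bar i}$, and record the components $\mu^C_{AB}$ of the bracket, the capital indices running in $\{1,\dots,n,\bar1,\dots,\bar n\}$. Integrability of $J$ (vanishing of the Nijenhuis tensor) forces the $(0,1)$-part of $[Z_i,Z_j]$ and the $(1,0)$-part of $[Z_{\bar i},Z_{\bar j}]$ to vanish, so the only free data are the $(1,0)$-parts $\mu^k_{ij}$ and $\mu^k_{i\bar j}$ of the brackets $[Z_i,Z_j]$ and $[Z_i,Z_{\bar j}]$, together with their conjugates. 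Expanding $\partial\bar\partial\omega$ then gives a $(2,2)$-form whose coefficients are explicit quadratic expressions in the $\mu^C_{AB}$, exactly the kind of bracket computation already carried out for $\rho^B$ in \eqref{rho1}; the SKT condition \eqref{SKT} becomes a quadratic system in these structure constants.

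The heart of the argument is to extract sign-definite information from this quadratic system by taking suitable metric traces, organised along the descending central series $\g\supseteq[\g,\g]\supseteq[\g,[\g,\g]]\supseteq\cdots$. Here nilpotency enters decisively: a nilpotent Lie algebra is unimodular, so every $\mathrm{ad}_X$ is traceless, and the ``first-order'' contributions of the form $\sum_k\mu^k_{\,\bullet k}$ drop out of the contracted identity. This is essential, because at the top level the SKT equation is genuinely indefinite --- as one already sees in Theorem~\ref{finosalamonparton}, where the constraint contains the indefinite term $2\,\mathrm{Re}(\bar B C)$ and does not force all structure constants to vanish. After removing these trace terms I would evaluate the resulting identity at the deepest nonzero stage $[\g,[\g,\g]]$ of the central series and obtain a relation in which the surviving quantity is a sum of squared norms of the components that push brackets into this stage. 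Positivity then forces $[\g,[\g,\g]]=0$, so $\g$ is abelian or $2$-step. The same vanishings pin down which mixed components $\mu^k_{i\bar j}$ survive, and reading these off shows that the center cannot be tilted by $J$, i.e. $J\mathfrak z=\mathfrak z$.

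I expect the main obstacle to be precisely this sign issue in the trace step: because SKT is not sign-definite at the surface level, the contraction must be engineered --- using unimodularity and, if needed, the closed invariant $(n,0)$-form guaranteed by Salamon's theorem --- so that exactly the step-$\ge 3$ data and the center-tilting data appear with a definite sign, while the indefinite top-level terms cancel. Once the correct pairing is found, the conclusion is immediate. By contrast, the initial reduction to $\g$ and the final structural translation of the vanishing statements into ``$2$-step'' and ``$J\mathfrak z=\mathfrak z$'' are routine.
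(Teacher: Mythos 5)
Your overall strategy---reduce to the nilpotent Hermitian Lie algebra $(\mathfrak{g},J,g)$, write $\partial\bar\partial\omega=0$ in structure constants, and extract a sign-definite identity by a metric trace---is the right family of ideas, and indeed the statement is not proved in this paper at all: it is quoted from \cite{EFV}, where exactly such a trace argument is carried out. But your write-up has a genuine gap: the one step that constitutes the entire mathematical content of the lemma, namely exhibiting a \emph{specific} contraction of the SKT identity whose surviving terms form a sum of squares, is never performed. You say so yourself: ``the contraction must be engineered \dots Once the correct pairing is found, the conclusion is immediate.'' That missing pairing \emph{is} the proof; everything before it (reduction to $\mathfrak{g}$, integrability constraints on the $\mu^C_{AB}$) and after it (translating vanishings into ``$2$-step'' and ``$J\xi=\xi$'') is routine, as you note. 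As it stands, the proposal is a plan, not a proof.

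Moreover, the hints you offer for finding the pairing point away from the mechanism that actually works. Unimodularity does not neutralize the indefinite terms: every nilpotent algebra is unimodular, yet the constraint in Theorem~\ref{finosalamonparton} still contains the indefinite summand $2\,\mathrm{Re}(\bar BC)$; and Salamon's closed $(n,0)$-form plays no role here. What produces definiteness in \cite{EFV} is \emph{centrality} of the vectors plugged into $\partial\bar\partial\omega$ \emph{before} tracing: taking $X$ in the center $\xi$ and evaluating $0=\sum_r \partial\bar\partial\omega(X,JX,Z_r,\bar Z_r)$ over a unitary frame, all brackets involving $X$ vanish identically, and after invoking the Jacobi identity and $\mathrm{tr}\,\mathrm{ad}=0$ the surviving quantity is a sum of norms of components of $\mathrm{ad}_{JX}$, forcing $JX\in\xi$. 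Thus $J\xi=\xi$ is proved \emph{first}; only then does one have the $J$-invariant orthogonal splitting $\mathfrak{g}=\xi\oplus\xi^{\perp}$ on which the second evaluation, yielding the $2$-step claim, becomes sign-definite. Your plan runs in the opposite order---prove $[\mathfrak{g},[\mathfrak{g},\mathfrak{g}]]=0$ directly by a trace ``at the deepest stage'' of the descending central series and then read off $J\xi=\xi$---and you give no reason why the cross terms (double brackets paired against non-central directions) would cancel in that contraction. So the gap is not cosmetic: the identity you defer is the theorem, and the tools you propose for producing it are not the ones that do.
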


As a direct application of Theorem \ref{EFV} we have the following

\begin{cor}
Let $M=\Gamma\backslash G$ be a nilmanifold together with an invariant complex structure $J$. Then $M$ does not admit any $J$-Hermitian invariant static metric with $\lambda\neq 0$ unless it is a complex torus.
\end{cor}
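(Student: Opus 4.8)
The plan is to obtain the statement as an immediate consequence of Theorem \ref{EFV}, via the bridge between static metrics and taming symplectic forms recorded earlier in the excerpt. First I would argue by contradiction: suppose $M=\Gamma\backslash G$ carries an invariant $J$-Hermitian static metric $g$ with $\lambda\neq 0$, and suppose that $(M,J)$ is \emph{not} a complex torus.

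Since a static metric is by definition SKT, the Bismut Ricci form $\rho^B$ of $g$ is well defined, and as the Ricci form of the canonical connection $\nabla^{-1}=\nabla^B$ it is closed. The static equation $\rho^B=\lambda\omega$ with $\lambda\neq0$ is exactly the hypothesis under which, as recalled above, the two-form $\Omega:=\frac1\lambda\,\rho^B$ is a symplectic form taming $J$ (it is closed, nondegenerate, and satisfies $\Omega(X,JX)>0$). The key observation I would then make is that $\Omega$ is \emph{invariant}: the Bismut connection, its curvature, and hence $\rho^B$ are built naturally out of the left-invariant data $(g,J)$, so they are left-invariant, and therefore so is $\Omega$. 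Thus $J$ is an invariant complex structure tamed by a symplectic form.

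Applying Theorem \ref{EFV} now forces $(M,J)$ to be a complex torus, contradicting our assumption and completing the proof. Because the corollary is a direct translation of Theorem \ref{EFV}, there is no substantive obstacle; the only point deserving a word of care is the passage from the static metric to a taming form lying in the invariant setting, which is immediate from the invariance of $g$ and $J$. I note moreover that invariance of $\Omega$ is not even strictly required, since Theorem \ref{EFV} already excludes taming of $J$ by an arbitrary symplectic form.
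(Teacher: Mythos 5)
Your proof is correct and follows exactly the paper's (implicit) argument: the paper presents this corollary as a direct application of Theorem \ref{EFV}, using the previously stated fact that a static metric with $\lambda\neq 0$ yields the taming symplectic form $\Omega=\frac{1}{\lambda}\rho^B$. Your closing remark that invariance of $\Omega$ is not needed is also consistent with the paper, which notes afterwards that even non-invariant static metrics with $\lambda\neq 0$ are excluded by the same reasoning.
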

About the problem of the existence of a static metric with $\lambda=0$ on a nilmanifold we have the following  

\begin{theorem}[Enrietti \cite{enrietti}]
Let $M=\Gamma \backslash G$ a nilmanifold together with an invariant complex structure $J$. Then $M$ does not admit any $J$-Hermitian invariant static metric with 
$\lambda=0$ unless it is a complex torus. 
\end{theorem}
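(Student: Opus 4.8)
The plan is to deduce the statement from the bracket flow analysis of Theorem \ref{EFVflow}, by observing that an invariant static metric with $\lambda=0$ is exactly a stationary point of the pluriclosed flow, hence of the associated bracket flow, and then invoking the convergence of the latter to the trivial bracket. So the argument will not require any new curvature computation: all the work has already been done in the results quoted above.

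First I would pass to the Lie algebra $(\g,\mu_o)$ of $G$ and work with the invariant data. By definition a static metric $g_o$ is SKT and satisfies $\rho^B(g_o)=\lambda\,\omega_o$; with $\lambda=0$ this reads $\rho^B(g_o)=0$, so in particular its $(1,1)$-part vanishes. Since the pluriclosed flow \eqref{PCF} has right-hand side $-[\rho^B]^{1,1}$, the form $\omega_o$ is then a constant (stationary) solution. To exploit this algebraically I would rewrite everything in terms of brackets as in \eqref{PCFalg}--\eqref{bracketflow}: identifying $(\g,J,\omega_o)$ with the standard Hermitian $\R^{2n}$ and $\mu_o$ with a Lie bracket satisfying \eqref{intwithmu}, the operator $P_{\mu_o}$ is defined by $\omega_o(P_{\mu_o}X,Y)=(\rho^B_{\mu_o})^{1,1}(X,Y)$. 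Because $\rho^B_{\mu_o}$ computed with the fixed metric $\omega_o$ is precisely the Bismut Ricci form $\rho^B(g_o)$, the static condition forces $(\rho^B_{\mu_o})^{1,1}=0$, and hence $P_{\mu_o}=0$ by nondegeneracy of $\omega_o$.

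Next I would feed this into the bracket flow \eqref{bracketflow}. Since $\frac{d}{dt}\mu=\tfrac12\,\delta_{\mu}(P_{\mu})$ and $P_{\mu_o}=0$, we obtain $\delta_{\mu_o}(P_{\mu_o})=\delta_{\mu_o}(0)=0$, so $\mu_o$ is an equilibrium of \eqref{bracketflow}. As the right-hand side of \eqref{bracketflow} is smooth (polynomial in $\mu$ once $\omega_o$ is fixed), uniqueness of solutions shows that the bracket flow issuing from $\mu_o$ is the constant curve $\mu(t)\equiv\mu_o$. On the other hand, Theorem \ref{EFVflow} guarantees that this maximal solution converges to the trivial bracket on $\R^{2n}$ as $t\to\infty$. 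A constant solution whose limit is $0$ must vanish identically, whence $\mu_o=0$; that is, $\g$ is abelian and $M=\Gamma\backslash G$ is a complex torus.

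The delicate point is not any estimate but the conceptual identification carried out above: one must make sure that the vanishing of the Bismut Ricci form of the \emph{fixed} structure $(\mu_o,\omega_o)$ really produces a \emph{stationary} point of the bracket flow, and not merely of the metric flow, i.e. that $P_{\mu_o}=0$ is equivalent to $\rho^B(g_o)=0$ under the dictionary of \eqref{PCFalg}. Once this is in place the conclusion is immediate from the convergence statement in Theorem \ref{EFVflow}. As a more hands-on alternative one can argue directly on $\g$: by the Lemma preceding Theorem \ref{EFV}, $\g$ is abelian or $2$-step with $J$-invariant center, so nilpotency kills the trace terms $\mu_{ar}^r$ and $\mu_{\bar b\bar r}^{\bar r}$ in \eqref{rho1}; contracting the remaining quadratic expression against $g^{i\bar j}$ then gives $g^{i\bar j}\rho^B_{i\bar j}=-|P|^2-|\tilde P|^2$ for suitable traces $P,\tilde P$ of $\mu_o$, and the static condition forces $P=\tilde P=0$. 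In this second route the genuine obstacle is that these trace conditions do \emph{not} see the $(2,0)$-part of the bracket, so one must invoke the full strength of $\partial\bar\partial\omega_o=0$ to propagate the vanishing to that part and conclude $\mu_o=0$. This is exactly the step that the flow argument circumvents, which is why I would present the flow proof as the main one.
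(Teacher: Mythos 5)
Your flow-theoretic argument is correct, but it is genuinely different from the paper's treatment: the paper offers no proof of this statement at all, quoting it from \cite{enrietti}, where it is established by a direct algebraic analysis of static SKT metrics on Lie groups. You instead obtain it as a soft corollary of the bracket-flow package quoted as Theorem \ref{EFVflow}, and the chain of implications holds up. A static metric with $\lambda=0$ is in particular an invariant SKT metric with $(\rho^B(g_o))^{1,1}=0$; under the identification of $(\g,J,\omega_o)$ with the standard Hermitian $\R^{2n}$, the form $\rho^B_{\mu_o}$ computed from the standard metric coincides with $\rho^B(g_o)$, so nondegeneracy of $\omega_o$ gives $P_{\mu_o}=0$ (strictly speaking $P_{\mu_o}=0$ is equivalent only to the vanishing of the $(1,1)$-part, not of all of $\rho^B(g_o)$, but staticity is stronger and the weaker vanishing is all you use); the right-hand side of \eqref{bracketflow} is polynomial in $\mu$, hence locally Lipschitz, so the constant curve $\mu(t)\equiv\mu_o$ is the unique solution issuing from $\mu_o$; the convergence to the trivial bracket asserted in Theorem \ref{EFVflow} then forces $\mu_o=0$, i.e. $\g$ abelian and $M$ a complex torus. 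What your route buys is brevity and a formally stronger conclusion: no non-abelian nilmanifold carries an invariant SKT metric that is even a fixed point of the pluriclosed flow, i.e. with $(\rho^B)^{1,1}=0$ rather than $\rho^B=0$. What it costs is self-containedness: the entire weight rests on the unnormalized convergence statement of \cite{EFV2}, a nontrivial analytic result, so your proof is only as strong as that quoted theorem (there is, however, no circularity, since the convergence in \cite{EFV2} is proved by norm estimates on $\mu(t)$ and does not invoke any classification of static metrics). Finally, you are right to flag your second, ``hands-on'' sketch as incomplete --- the trace identities extracted from \eqref{rho1} do not control the $(2,0)$-part of the bracket --- so presenting the flow argument as the main proof is the correct choice.
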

Since  a static metric  on a complex manifold  induces a symplectic structure taming the complex structure,  it follows that  a nilmanifold  equipped with an invariant complex structure $J$ cannot admit a non-invariant static metric having $\lambda\neq 0$, unless $M$ is a complex torus.  It  would be interesting  to extend Theorem \ref{EFV} to the almost complex case. 

\begin{problem}\label{tamed}
Let $(M,J)$ be a nilmanifold with an {\em invariant } almost complex structure. Does the existence of a symplectic form taming $J$ imply the existence of an invariant  symplectic form compatible with $J$?
\end{problem}

Problem \ref{tamed} was confirmed in \cite{litomassini}  for  the Kodaira-Thurston manifold by Li and Tomassini. 
Some partial results  about the existence of Hermitian-symplectic structures on solvmanifolds $M = \Gamma \backslash G$ endowed with an invariant complex structure $J$ have been obtained in \cite{conilgiapu}, showing that if either $J$ is invariant under the action of a nilpotent complement of the nilradical of  $G$  or $J$ is abelian or $G$  is almost abelian (not of type (I)), then the solvmanifold $\Gamma \backslash G$ cannot admit any symplectic form taming the complex structure $J$, unless  $\Gamma \backslash G$ is K\"ahler.  In particular,  the family of non-K\"ahler complex manifolds constructed by Oeljeklaus and Toma \cite{OT}  cannot admit any symplectic form taming the complex structure.

By \cite{EFG} it turns out that symplectic forms taming complex structures on compact manifolds are related to special types of almost generalized K\"ahler structures. Indeed, by considering the commutator  $Q$  of the two associated almost complex structures  $J_{\pm}$,  it is shown that if either the manifold is 4-dimensional or the distribution $Im (Q)$  is involutive, then the manifold can be expressed locally as a disjoint union of twisted Poisson leaves. It would be interesting to see if this property can be extended in higher dimensions.

\section{SKT and balanced structures}
Another important class of Hermitian metrics is provided by {\em balanced metrics}. An Hermitian metric on a complex manifold $(M,J)$ is called {\em balanced} if its fundamental form $\omega$ co-closed or equivalently if its Lee form $\theta$ vanishes.  By \cite{aliv}  in  real dimension $2n \geq  6$   the vanishing of $\theta$  is complementary to the SKT condition, i.e.  an Hermitian metric which is simultaneously balanced and SKT  has to be K\"ahler.
Balanced  structures were characterised  in terms of currents by Michelshon  \cite{M},  where a deep obstruction for the existence of a such metrics   is provided. From Michelshon's paper it  in particular  that Calabi-Eckmann manifolds have no balanced metrics. Typically examples of complex manifolds admitting a balanced metric are twistor spaces of compact anti-self-dual $4$-dimensional Riemannian manifolds.

About the existence of SKT and balanced metrics we propose the following problem. 

\begin{problem}\label{balanced}  Show that a  compact complex manifold $(M,J)$ cannot admit a compatible SKT metric $g$ and also a compatible balanced metric $\tilde g$ unless $(M, J) $ is K\"ahler. 
\end{problem}

The above problem has been implicitly  already solved  in literature  in some special cases. For instance: Verbitsky proved in \cite{verb} that the twistor space of
a compact, anti-self-dual Riemannian manifold has no SKT metrics unless it has K\"ahler metrics and 
Chiose proved  in \cite{chiose}  a similar result for non-K\"ahler manifolds belonging to the Fujiki class. Moreover, Li, Fu and Yau found in \cite{fuliyau} a new class of non-K\"ahler balanced manifolds by using conifold transactions. Such examples include the connected sums $M_k$ of $k$-copies of $S^3\times S^3$, $k\geq 1$. It is proved in \cite{fuliyau} that $M_k$ has no SKT metrics. 

A restriction can be given in terms of the  Bott-Chern cohomology groups, which are defined  for a  general complex manifold $(M,J)$ as
$$
H^{p,q}_{BC} (M) =\frac{\left\{\alpha \in \Omega^{p,q}(M)\,\,:\,\,d\alpha=0\right\}}{\left\{\partial \bar\partial\gamma\,\,:\,\,\gamma \in \Omega^{p-1,q-1}(M)\right\}}\,. 
$$
\begin{prop}
Let $(M,J)$ be compact complex manifold having $H^{n-1,n-1}_{BC} (M)=0$. If $(M,J)$ has a balanced metric, then it has no SKT metrics.    
\end{prop}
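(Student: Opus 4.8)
The plan is to argue by contradiction, pairing a power of the balanced form against the SKT form and exploiting the triviality of $H^{n-1,n-1}_{BC}(M)$. So suppose $(M,J)$ carried both a balanced metric, with fundamental form $\omega$, and an SKT metric, with fundamental form $\sigma$. First I would record the two defining conditions in the most convenient shape: the balanced condition $d^*\omega=0$ is equivalent to $d\,\omega^{n-1}=0$, so $\omega^{n-1}$ is a $d$-closed $(n-1,n-1)$-form and hence determines a class in $H^{n-1,n-1}_{BC}(M)$; the SKT condition is simply $\partial\bar\partial\sigma=0$. The strategy is that these two facts should be incompatible once we feed in the cohomological hypothesis, because the positivity of the metrics forces a certain integral to be strictly positive while exactness forces it to vanish.

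Next I would use the hypothesis $H^{n-1,n-1}_{BC}(M)=0$: since the class of $\omega^{n-1}$ in this group is trivial, there is an $(n-2,n-2)$-form $\gamma$ with $\omega^{n-1}=\partial\bar\partial\gamma$. On the one hand, because $\sigma$ and $\omega$ are both positive $(1,1)$-forms one has pointwise $\sigma\wedge\omega^{n-1}=\frac{1}{n}(\mathrm{tr}_\omega\sigma)\,\omega^{n}>0$, so that
$$
\int_M \sigma\wedge\omega^{n-1}>0 .
$$
On the other hand, substituting $\omega^{n-1}=\partial\bar\partial\gamma$ and integrating by parts over the closed manifold $M$, the operator $\partial\bar\partial$ can be transferred from $\gamma$ onto $\sigma$ (up to sign), using that the integral over $M$ of any $\partial$-exact or $\bar\partial$-exact $(n,n)$-form vanishes by Stokes, together with the type considerations that make $d$ and $\partial$ (resp. $\bar\partial$) agree on the relevant forms. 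This yields
$$
\int_M \sigma\wedge\omega^{n-1}=\int_M \sigma\wedge\partial\bar\partial\gamma=\pm\int_M (\partial\bar\partial\sigma)\wedge\gamma=0 ,
$$
the last equality being the SKT condition $\partial\bar\partial\sigma=0$. Comparing the two displays gives the contradiction, so no SKT metric can coexist with the balanced one.

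The routine part is the two Stokes steps, where I would only need to keep track of the degree shifts and signs carefully (each $\partial$ or $\bar\partial$ applied to a $(2n-1)$-form is actually $d$ on that form, since the complementary component lands in bidegree $(n+1,n-1)$ or $(n-1,n+1)$ and hence vanishes). The genuine conceptual step, and the place where the hypothesis is indispensable, is the passage from $d$-closedness of $\omega^{n-1}$ to its $\partial\bar\partial$-exactness: it is precisely the vanishing of the Bott-Chern group $H^{n-1,n-1}_{BC}(M)$ that upgrades a closed representative to a $\partial\bar\partial$-potential, which is exactly the form needed to meet the SKT condition after integration by parts. I expect the main obstacle to be ensuring the positivity pairing is stated correctly (that $\sigma\wedge\omega^{n-1}$ is a strictly positive volume form, not merely nonnegative) and that the cohomological representative is the power $\omega^{n-1}$ rather than $\omega$ itself; once these are in place the contradiction is immediate.
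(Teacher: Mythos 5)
Your proof is correct and follows essentially the same argument as the paper: both exploit that the balanced condition makes $\omega^{n-1}$ a $d$-closed $(n-1,n-1)$-form, use $H^{n-1,n-1}_{BC}(M)=0$ to write it as a $\partial\bar\partial$-potential (the paper writes $dd^c\alpha$, which is the same up to a constant), and then contrast the strict positivity of $\int_M \sigma\wedge\omega^{n-1}$ with its vanishing obtained by moving $\partial\bar\partial$ onto the SKT form via Stokes. The only differences are cosmetic (notation and the explicit pointwise positivity formula), so there is nothing to add.
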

\begin{proof}
Assume that $(M,J)$ admits an SKT metric $g$ and also a balanced metric $\tilde g$ and let $\omega$ and $\tilde \omega$ be the induced fundamental forms.  Then $\omega\wedge \tilde \omega^{n-1}$ is a volume form on $M$ and so 
$$
\int_M\omega\wedge \tilde \omega^{n-1}\neq 0\,. 
$$
In particular if $H_{BC}^{n-1,n-1}(M,J)=0$, then $\tilde \omega^{n-1}=dd^c \alpha$ for some $(n-2,n-2)$-form $\alpha$ and then 
$$
\int_M\omega\wedge \tilde \omega^{n-1}=\int_M\omega\wedge dd^c\alpha=\pm \int_M dd^c\omega \wedge \alpha=0 
$$
which is a contradiction.
\end{proof}
 
 \subsection{Problem \ref{balanced} for  nilmanifolds and solvmanifolds}
 In this last section we  study  Problem \ref{balanced}  for   nilmanifolds  of dimension $6$ or $8$  and for  $6$-dimensional solvmanifolds. For the nilpotent case we assume that the complex structure $J$  is invariant and in the solvable case we  suppose that  in addition  $J$ has the canonical bundle holomorphically trivial. 
 
 First of all we recall the following result  (see \cite{finograntharov,ugarte}) which allows us to assume the metrics to be  invariant and to work at the level of the Lie algebra of $G$.
\begin{theorem}\label{anna+gueo}
Let $M=\Gamma\backslash G$ be the compact quotient of a Lie group $G$ by a discrete subgroup $\Gamma$  equipped with an invariant complex structure $J$. Then 
\begin{enumerate}
\item[-] $M$ has an SKT metric if and only if it has an {\em invariant} SKT metric;  
\item[-] $M$ has a balanced metric if and only if it has an {\em invariant} balanced metric.
\end{enumerate}
\end{theorem}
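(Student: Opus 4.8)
The plan is to use a symmetrization (averaging) argument. Since $M=\Gamma\backslash G$ is a compact quotient, the group $G$ is unimodular and its bi-invariant Haar measure descends to a $G$-invariant probability measure $dV$ on $M$; moreover the right action of $G$ on $\Gamma\backslash G$ is by biholomorphisms because $J$ is invariant. Using the global frame of $TM$ furnished by left-invariant vector fields (identified with $\g$), I would define for every $k$-form $\alpha$ on $M$ an invariant form $\mu(\alpha)\in\Lambda^k\g^*$ by
$$
\mu(\alpha)(X_1,\dots,X_k)=\int_M \alpha(X_1,\dots,X_k)\,dV,\qquad X_1,\dots,X_k\in\g.
$$
This is the Belgun-type symmetrization underlying \cite{finograntharov,ugarte}.

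First I would record the formal properties of $\mu$. It fixes invariant forms and, crucially, it commutes with $d$: expanding $d$ through the Koszul formula on left-invariant fields, the derivative terms $X_i(\alpha(\dots))$ integrate to zero by invariance of $dV$ under the flows generated by the $X_i$, leaving only the bracket terms, which reassemble into $d(\mu\alpha)$. Since $J$ is invariant, $\mu$ preserves the bidegree decomposition and therefore commutes with $\partial$ and $\bar\partial$ as well. Finally, positivity of a real $(1,1)$-form is a pointwise convex open condition preserved by each $R_g^*$, so $\mu$ sends the fundamental form of a Hermitian metric to the fundamental form of an \emph{invariant} Hermitian metric.

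Granting these properties, the SKT statement is immediate: if $\omega$ is SKT, then $\mu(\omega)$ is an invariant positive $(1,1)$-form with $\partial\bar\partial\mu(\omega)=\mu(\partial\bar\partial\omega)=0$, hence an invariant SKT metric. The balanced case is the genuinely delicate point, because $\mu$ does not commute with the nonlinear operation $\omega\mapsto\omega^{n-1}$, so I cannot average the fundamental form directly. Instead I would average the closed positive $(n-1,n-1)$-form $\Psi=\tilde\omega^{n-1}$ attached to a balanced metric $\tilde\omega$ (recall balanced $\iff d\omega^{n-1}=0$). Then $\mu(\Psi)$ is invariant and closed, and it remains strictly positive in Michelsohn's cone, again because that cone is convex, open and $R_g^*$-invariant. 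By Michelsohn's bijection \cite{M} between positive $(1,1)$-forms and their $(n-1)$-st powers — applied within the finite-dimensional space of invariant forms — there is a unique invariant positive $(1,1)$-form $\omega'$ with $(\omega')^{n-1}=\mu(\Psi)$; since $d(\omega')^{n-1}=d\mu(\Psi)=0$, the form $\omega'$ is an invariant balanced metric.

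The main obstacle I anticipate is precisely this balanced case: one must verify that averaging preserves strict positivity in the $(n-1,n-1)$-cone and then invoke the Michelsohn correspondence to descend back to a genuine $(1,1)$ fundamental form, rather than attempting to symmetrize the metric itself. A secondary point to check carefully is the identity $\mu d=d\mu$, which rests on the unimodularity of $G$ (so that $dV$ is genuinely $G$-invariant and the flow-derivative terms integrate to zero); without unimodularity the argument would fail.
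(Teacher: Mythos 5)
Your proposal is correct and is essentially the proof the paper relies on: the paper states this result without proof, recalling it from \cite{finograntharov,ugarte}, and the argument there is exactly your Belgun-type symmetrization --- average $\omega$ for the SKT case, and for the balanced case average $\omega^{n-1}$ and extract its unique positive $(n-1)$-st root via \cite{M}, using $\mu\,d = d\,\mu$, which as you say rests on unimodularity (automatic here since $G$ admits a compact quotient by a discrete subgroup). One inessential slip: right translations on $\Gamma\backslash G$ are in general \emph{not} biholomorphisms of an invariant $J$ (that would require $J$ to commute with every $\mathrm{Ad}(g)$), but your argument never actually uses this claim, since commutation with $d$ only needs right-invariance of the measure under the flows of invariant fields, and bidegree preservation and strict positivity of the averaged forms follow pointwise from evaluation on the invariant frame rather than from pulling back by the right action.
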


The next theorem is about Problem \ref{balanced} when $M$ is a nilmanifold of dimension $6$ or $8$.

\begin{theorem}
Let $M=\Gamma\backslash G$ be a   nilmanifold  equipped with an invariant complex structure $J$.   If   $M$ has dimension 6 or 8, then  $(M, J)$ cannot admit a compatible SKT metric $g$ and also a compatible balanced metric $\tilde g$ unless  $(M, J) $ is  K\"ahler, i.e.  $G$ is abelian. 
\end{theorem}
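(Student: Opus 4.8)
The plan is to push everything down to the Lie algebra level and then to play the structure equations forced by the SKT condition against the balanced equation until all structure constants are forced to vanish. By Theorem \ref{anna+gueo} we may assume that both the SKT metric $g$ and the balanced metric $\tilde g$ are invariant, so the entire problem lives on the Lie algebra $\g$ of $G$, with the two fundamental forms $\omega$ and $\tilde\omega$ invariant $(1,1)$-forms. A useful guiding remark is the following integral obstruction, which is really the engine behind the Proposition above: since $\omega$ is a positive $(1,1)$-form and $\tilde\omega^{n-1}$ a positive $(n-1,n-1)$-form, the product $\omega\wedge\tilde\omega^{n-1}$ is a volume form, whence
$$
\int_M\omega\wedge\tilde\omega^{n-1}>0,
$$
while $\partial\bar\partial\omega=0$ forces this integral to vanish as soon as $\tilde\omega^{n-1}$ is $\partial\bar\partial$-exact. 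Thus a balanced and an SKT metric can coexist only if $[\tilde\omega^{n-1}]$ is a nonzero class in (invariant) Bott--Chern cohomology, and the heart of the proof is to show that, for the complex structures at hand, this happens only in the Kähler (abelian) case.

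For the $6$-dimensional case I would start from Theorem \ref{finosalamonparton}: the existence of the invariant SKT metric $g$ provides a $(1,0)$-coframe $(\alpha^i)$ with $d\alpha^1=d\alpha^2=0$ and
$$
d\alpha^3=A\alpha^{\bar12}+B\alpha^{\bar22}+C\alpha^{1\bar1}+D\alpha^{1\bar2}+E\alpha^{12},
$$
subject to $|A|^2+|D|^2+|E|^2+2\,{\rm Re}(\bar BC)=0$. I would then write the most general invariant Hermitian form $\tilde\omega=i\sum_{j,k}h_{j\bar k}\,\alpha^{j\bar k}$ with $(h_{j\bar k})$ Hermitian positive definite, and impose the balanced equation in the form $d\tilde\omega^2=2\,\tilde\omega\wedge d\tilde\omega=0$. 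Since $\alpha^1,\alpha^2$ and their conjugates are closed, $d\tilde\omega$ is governed entirely by $d\alpha^3$ and $d\alpha^{\bar3}$, so the computation of $\tilde\omega\wedge d\tilde\omega$ reduces to a finite, explicit list of $5$-forms whose coefficients are bilinear in the $h_{j\bar k}$ and linear in $A,\dots,E$. Setting these coefficients to zero yields an algebraic system; the goal is to show that, together with the positivity of $(h_{j\bar k})$ and the SKT relation, it forces $A=B=C=D=E=0$, i.e. $\g$ abelian and $(M,J)$ a complex torus.

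The expected main obstacle is precisely this last deduction: the balanced equations and the SKT quadratic relation pull the structure constants in \emph{complementary} directions, this being the algebraic shadow of the fact from \cite{aliv} that a single metric cannot be simultaneously balanced and SKT without being Kähler, and one must verify that no positive-definite $(h_{j\bar k})$ can reconcile them unless all constants vanish. Concretely I expect the balanced equations already to force the constants entering with a definite sign in the SKT relation, namely $A,D,E$, to vanish, after which the residual relation $2\,{\rm Re}(\bar BC)=0$ together with the remaining balanced equations eliminates $B$ and $C$ as well.

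For dimension $8$ the strategy is identical but organised around the classification in \cite{EFV} of the (at most $2$-step) nilpotent Lie algebras carrying an invariant complex structure with an SKT metric. For each algebra and complex structure in that finite list I would write the general invariant Hermitian form and impose $d\tilde\omega^3=0$, checking in every non-abelian case that the resulting equations admit no positive-definite solution. The work here is bookkeeping over finitely many cases rather than any new idea, and the integral obstruction of the first paragraph, applied case by case to rule out a nonzero invariant class $[\tilde\omega^3]$, can be used to shorten several of them.
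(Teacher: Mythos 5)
Your reduction to invariant metrics via Theorem \ref{anna+gueo} matches the paper, but after that the proposal misses the idea that makes the proof work and replaces it with computations that are announced rather than performed. In dimension $6$ you quote Theorem \ref{finosalamonparton} only for its structure equations and overlook its first assertion: on a $6$-dimensional nilmanifold with invariant $J$ the SKT condition is satisfied by either \emph{all} invariant Hermitian metrics or by \emph{none}, i.e.\ it depends only on the complex structure. Granting this, the balanced metric $\tilde g$ is itself SKT, and by the vanishing theorem of \cite{aliv} a metric that is simultaneously SKT and balanced in real dimension $\geq 6$ is K\"ahler, which is exactly the theorem's conclusion; no system of equations ever needs to be solved. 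Your alternative route --- impose $d\tilde\omega^2=0$ for a general positive matrix $(h_{j\bar k})$ and show, jointly with the SKT relation, that $A=B=C=D=E=0$ --- is left as a goal and an expectation (``I expect the balanced equations already to force $A,D,E$ to vanish''), so as written the core of the $6$-dimensional case is missing, and the expected intermediate step is unverified speculation.

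In dimension $8$ the plan is not just incomplete but miscast at a concrete point. For the second family \eqref{structeq2} the SKT equations involve only the structure constants, so the dimension-$6$ argument (balanced $\Rightarrow$ SKT $\Rightarrow$ K\"ahler by \cite{aliv}) applies verbatim. For the first family \eqref{structeq1}, however, the SKT condition is genuinely metric-dependent: equation \eqref{SKTanna} couples the coefficients $a_l$ of the metric with the structure constants, so the classification of \cite{EFV} is not a ``finite list'' of algebras on which SKT existence is already settled --- it is only a normal form, and whether a given choice of constants admits an SKT metric is itself a positivity problem. Your proposed check therefore forks into two dead ends. If you run it over all non-abelian family-one constants, the claim that ``$d\tilde\omega^3=0$ admits no positive-definite solution'' is false: for $C_4=-B_4$, $G_4=-F_4$ and, say, $B_1\neq 0$ (the paper's \eqref{newbasis}) the diagonal metric $-i\sum_{j=1}^4\alpha^{j\bar j}$ \emph{is} balanced on a non-abelian algebra. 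If instead you first restrict to constants that genuinely admit an SKT metric, you must solve the very positivity problem your plan presupposes solved. The paper escapes this by using the two metrics jointly: the balanced metric, after a Gram--Schmidt normalization of the coframe, forces $C_4=-B_4$ and $G_4=-F_4$; then the SKT equation \eqref{condSKTbal} for the SKT metric is rewritten as $\omega(X_1,\bar X_1)+\omega(X_2,\bar X_2)+\omega(X_3,\bar X_3)+\omega(X_4,\bar X_4)=0$, where the $X_i$ are explicit vectors in the span of $Z_3,Z_4$ with the structure constants as coefficients, and positivity of $\omega$ forces every $X_i=0$, i.e.\ $\g$ abelian. This joint step is absent from your outline. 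Finally, the Bott--Chern obstruction you hope will ``shorten several cases'' cannot carry weight here: invoking it would require establishing $\partial\bar\partial$-exactness of $\tilde\omega^3$ (or vanishing of $H^{3,3}_{BC}$) algebra by algebra, which you do not do and which plays no role in the paper's argument.
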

 \begin{proof} 
By  Theorem \ref{anna+gueo}  we may assume that  $g$ and $\tilde g$ are both  invariant.  Then we can suppose that  $(M,J)$ has an invariant SKT metric.  If $\dim M = 6$,   the existence of an
SKT structure on a nilpotent Lie algebra $\mathfrak g$  depends only on the complex structure of  $\mathfrak g$. Indeed, Theorem \ref{finosalamonparton} implies that also $g$ has to be SKT. Therefore $g$ is simultaneously balanced and SKT and hence it is K\"ahler.

 If $\dim M = 8$, the situation is more complicated and we can use a  classification obtained in \cite{EFV}. More precisely, by \cite{EFV}
the existence of an {\rm SKT} metric $g$ on $M$ compatible with $J$ implies that the Lie algebra $(\g,J)$ has a $(1,0)$-coframe 
$\{\alpha^1,\alpha^2,\alpha^3,\alpha^4\}$ satisfying one of the following structure equations 
\begin{enumerate}
\item[\em{1.}] {{\em First family}}:
\begin{equation} \label{structeq1}
\begin{cases}
d \alpha^j= 0, \quad j = 1,2,\\
d \alpha^3 = B_1 \alpha^{1 2} + B_4 \alpha^{1 \overline 1} + B_5 \alpha^{1 \overline 2} + C_3 \alpha^{2 \overline 1} + C_4 \alpha^{2 \overline 2}\,,\\
d \alpha^4 =F_1\alpha^{12} + F_4 \alpha^{1 \overline 1} + F_5 \alpha^{1 \overline 2} + G_3 \alpha^{2 \overline 1} + G_4 \alpha^{2 \overline 2}\,,
\end{cases}
\end{equation}
where   the capital letters are arbitrary complex numbers;
\item[{\em 2.}] {\em Second family}: 
\begin{equation} \label{structeq2}
 \begin{cases}
  \begin{aligned}
 d \alpha^j =&0, \quad j = 1,2,3\,,\\
 d \alpha^4 = &F_1 \alpha^{12} + F_2 \alpha^{13}+ F_4 \alpha^{1 \overline 1} + F_5 \alpha^{1 \overline 2} + F_6 \alpha^{1 \overline 3} + G_1 \alpha^{23} +  G_3 \alpha^{2 \overline 1} + G_4 \alpha^{2 \overline 2} \\[-3pt]
 &+ G_5 \alpha^{2 \overline 3}
 + H_2 \alpha^{3 \overline 1} + H_3 \alpha^{3 \overline 2} + H_4 \alpha^{3 \overline 3}\,,
\end{aligned}
\end{cases}
\end{equation}
where  the capital letters are arbitrary complex numbers and  $H_4\neq 0$.
\end{enumerate}
For the second family,  the SKT equations  for a generic  $J$-Hermitian metric  $g$ are 
$$
\left \{ \begin{array}{l}
- H_3 \overline F_4  + H_2 \overline G_3 + F_5 \overline F_6 - F_4  \overline G_5 + F_2 \overline F_1=0,\\[4 pt]
-H_3 \overline F_5  + G_4 \overline F_6  + H_2  \overline G_4 - G_3  \overline G_5 + G_1 \overline F_1 =0,\\[4 pt]
-H_4 \overline F_5 + G_5 \overline F_6 + H_2 \overline H_3 -G_3  \overline H_4 + G_1 \overline F_2=0,\\[4 pt]
\vert F_2 \vert^2 + \vert F_6 \vert^2 +\vert H_2 \vert^2= 2 \, {\mbox {Re}} (H_4 \overline F_4),\\[4 pt]
 \vert F_1 \vert^2 + \vert F_5 \vert^2 +  \vert G_3 \vert^2 = 2 \, {\mbox {Re}} (F_4 \overline G_4),\\[4 pt]
  \vert G_1 \vert^2   + \vert G_5 \vert^2 + \vert H_3 \vert^2 = 2 \, {\mbox {Re}} (H_4 \overline G_4 )
\end{array} \right.
$$
  and so as in the $6$-dimensional case the SKT condition depends only on the complex structure and the theorem follows.
  
 For the first  family it is not anymore true that the existence of an
SKT metric    depends only on the complex structure.
Indeed, consider a generic  $J$-Hermitian metric $g$. The   fundamental form  $\omega$  associated to the Hermitian structure $(J, g)$ can be then expressed as $$
\begin{aligned}
\omega = & a_1 \alpha^{1\ov{1}} + a_2 \alpha^{2\ov{2}} + a_3 \alpha^{3\ov{3}} + a_4 \alpha^{4\ov{4}} +a_5  \alpha^{1 \overline 2}- \overline a_5 \alpha^{2 \overline 1} +a_6 \alpha^{1 \overline 3} -   \overline a_6 \alpha^{3 \overline 1} + a_7 \alpha^{1 \overline 4} -\overline  a_7 \alpha^{4 \overline 1} \\[-3pt]
& +  a_8 \alpha^{2 \overline 3} - \overline a_{8}\alpha^{3 \overline 2}  +a_9 \alpha^{2 \overline 4} - \overline a_{9} \alpha^{4 \overline 2} +  a_{10}  \alpha^{3\ov{4}} - \ov{a}_{10} \alpha^{4\ov{3}}\,,
\end{aligned}
$$
where  $a_l$, $l = 1, \dots, 10$, are arbitrary   complex numbers     (with  $\overline a_l = - a_l$, for any $l = 1, \ldots, 4$) such that  $\omega$  is positive definite.
The SKT equation for a generic $J$-Hermitian metric $g$ is:
\begin{equation}\label{SKTanna}
\begin{aligned}
&- a_3 C_4 \overline B_4 - 2 a_{10} B_4 \overline G_4 - a_3 B_4 \overline C_4 + a_{10} B_1 \overline F_1 + a_3 \vert B_1 \vert^2 - \overline a_{10} \overline B_1 F_1 + \overline a_{10} \overline C_4 F_4\\[-3pt]
 &+ a_4 \vert F_1 \vert^2 - \overline a_{10} G_3 \overline C_3 + a_4 \vert G_3 \vert^2 + a_3 \vert B_5 \vert^2 - \overline a_{10} \overline B_5 F_5 - a_4 \overline G_4 F_4 + a_3 \vert C_3 \vert^2\\[-3pt]
  &+ a_4 \vert F_5 \vert^2 + a_{10} \overline F_5 B_5 - a_4 \overline F_4 G_4 + \overline a_{10} G_4 \overline B_4
   + a_{10} \overline G_3 C_3 - a_{10} C_4 \overline F_4 =0,
\end{aligned}
\end{equation}
so it not anymore true  that   that every $J$-Hermitian metric is ${\rm SKT}$.

We can show that  the  nilpotent Lie algebras  of the first family  admit  balanced metrics if and only if they have a coframe $\{\alpha^1,\alpha^2,\alpha^3,\alpha^4\}$  of $(1,0)$-forms satisfying \eqref{structeq1} with 
$C_4 = - B_4$ and $G_4 = - F_4$, i.e.
\begin{equation}  \label{newbasis}
\begin{cases}
d \alpha^j= 0, \quad j = 1,2,\\
d \alpha^3 = B_1 \alpha^{1 2} + B_4 \alpha^{1 \overline 1} + B_5 \alpha^{1 \overline 2} + C_3 \alpha^{2 \overline 1} -B_4 \alpha^{2 \overline 2}\,,\\
d \alpha^4 =F_1\alpha^{12} + F_4 \alpha^{1 \overline 1} + F_5 \alpha^{1 \overline 2} + G_3 \alpha^{2 \overline 1} -F_4 \alpha^{2 \overline 2}\,.
\end{cases}
\end{equation}
Moreover, the  Hermitian metric  associated to  $\omega = -i\sum_{j = 1}^4  \alpha^{j}\wedge\alpha^{ \overline j}$  is always balanced. 

Indeed, applying the  Gram-Schmidt process to the the basis  $\{  \alpha^1, \dots, \alpha^4 \}$  satisfying the structure equations  \eqref{structeq1} we get  a $g$-unitary coframe
$\{  \tilde \alpha^1, \dots, \tilde \alpha^4 \}$ such that
$$
\left \{ \begin{array}{l}
d \tilde  \alpha^j = 0, \quad j = 1, 2,\\
d \tilde \alpha^l \in \Lambda^2 \langle \tilde \alpha^1, \ldots, \tilde \alpha^p, \overline {\tilde  \alpha^1}, \ldots , \overline {\tilde \alpha^p} \rangle, \quad  l = 3, 4,
\end{array} \right.
$$
since ${\mbox {span}} \langle  \tilde \alpha^1, \ldots,  \tilde \alpha^j\rangle={\mbox {span}} \langle \alpha^1, \ldots, \alpha^j\rangle $, for any $j= 1, \ldots, 4$. Then it is not restrictive to assume that  the basis $ \{\alpha^1,\dots,\alpha^4\}$
is $g$-unitary, i.e.  that  the fundamental form $\omega$ of $g$ with respect to $\{\alpha^1,\dots,\alpha^4\}$ takes the standard expression:
$$
\omega = -i\sum_{j = 1}^4  \alpha^{j}\wedge\alpha^{ \overline j}.
$$
Finally a direct computation implies that  $d \omega^3 =0$ if and only if  $C_4 = - B_4$, $G_4 = - F_4$.

Now,  it is possible to prove that the nilpotent Lie algebras  $\frak g$  with the $(1,0)$-coframe $\{\alpha^k\}$ satisfying \eqref{newbasis} cannot  have    SKT metrics, unless  $\frak g$  is abelian.

Assume by contraddiction that  $\frak g$  admits an SKT metric $g$  and let $\omega$ be the associated 
fundamental form.   We may write  $\omega$   as $$
\begin{aligned}
\omega = & a_1 \alpha^{1\ov{1}} + a_2 \alpha^{2\ov{2}} + a_3 \alpha^{3\ov{3}} + a_4 \alpha^{4\ov{4}} +a_5  \alpha^{1 \overline 2}- \overline a_5 \alpha^{2 \overline 1} +a_6 \alpha^{1 \overline 3} -   \overline a_6 \alpha^{3 \overline 1} + a_7 \alpha^{1 \overline 4} -\overline  a_7 \alpha^{4 \overline 1} \\[-3pt]
& +  a_8 \alpha^{2 \overline 3} - \overline a_{8}\alpha^{3 \overline 2}  +a_9 \alpha^{2 \overline 4} - \overline a_{9} \alpha^{4 \overline 2} +  a_{10}  \alpha^{3\ov{4}} - \ov{a}_{10} \alpha^{4\ov{3}}\,,
\end{aligned}
$$
where  $a_l$, $l = 1, \dots, 10$, are arbitrary   complex numbers     (with  $\overline a_l = - a_l$, for any $l = 1, \ldots, 4$)  satisfying 
\begin{equation}\label{condSKTbal}
\begin{array}{c}
a_3  (2 \vert B_4 \vert^2   + \vert B_1 \vert^2 +  \vert B_5 \vert^2 + \vert C_3 \vert^2)  + a_4 ( \vert F_1 \vert^2 +    \vert G_3 \vert^2   +2   \vert F_4 \vert^2 +  \vert F_5 \vert^2 ) = \\[3pt]
- a_{10} \overline F_5 B_5   + \overline a_{10} \overline B_5 F_5
   -a_{10} \overline G_3 C_3  + \overline a_{10} G_3 \overline C_3  -  2 a_{10} B_4 \overline F_4 + 2 \overline a_{10} \overline B_4 F_4 - a_{10} B_1 \overline F_1 + \overline a_{10} \overline B_1 F_1
\end{array}
\end{equation}
and such that  $\omega$  is positive definite.

Condition \eqref{condSKTbal} can be rewritten as 
$$
\omega(X_1,\bar X_1)+\omega(X_2,\bar X_2)+\omega(X_3,\bar X_3)+\omega(X_4,\bar X_4)=0
$$
where 
$$
\begin{aligned}
&X_1=B_1 Z_3 + F_1 Z_4\,,\quad X_2=B_5 Z_3 + F_5 Z_4\,,\\
&X_3=C_3 Z_3 + G_3 Z_4\,,\quad X_4=\sqrt{2} B_4 Z_3 + \sqrt{2} F_4 Z_4
\end{aligned}
$$
and $\{ Z_j \}$ is dual  frame dual  of  $\{ \alpha^j \}.$ Since $\omega$ is positive definite, we have $X_1 = X_2 = X_3= X_4=0$ which implies that all the forms $\alpha^k$'s must  be closed, i.e.  that $\frak g$ is abelian. 
\end{proof}

Now we treat the solvable case.

\begin{theorem}
Let $M=\Gamma\backslash G$ be a  $6$-dimensional  solvmanifold  equipped with an invariant complex structure $J$ with holomorphically trivial canonical bundle.   Then  $(M, J)$ cannot admit a compatible SKT metric $g$ and also a compatible balanced metric $\tilde g$ unless  $(M, J)$ is  K\"ahler. 
\end{theorem}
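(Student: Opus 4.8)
The plan is to push everything down to the Lie algebra and then run through the very short list of cases in which an SKT metric can exist. By Theorem \ref{anna+gueo} I may assume that the SKT metric $g$ and the balanced metric $\tilde g$ are both invariant, so that the problem lives entirely on $(\mathfrak g, J)$, where $\mathfrak g$ is the Lie algebra of $G$. I then split according to whether $G$ is nilpotent or not. If $G$ is nilpotent, $M$ is a $6$-dimensional nilmanifold and I am back in the situation of the previous theorem: by Theorem \ref{finosalamonparton} the SKT condition on a $6$-dimensional nilmanifold depends only on $J$, so the existence of one invariant SKT metric forces \emph{every} invariant Hermitian metric — in particular the invariant balanced metric $\tilde g$ — to be SKT. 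A metric which is simultaneously balanced and SKT is K\"ahler by \cite{aliv}, and we are done.

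If $G$ is not nilpotent, I invoke the classification recalled in the proof of Theorem \ref{lte}: by \cite[Theorem 4.1]{FU} the existence of an invariant SKT metric forces $\mathfrak g$ to be isomorphic either to $\mathfrak g_2^0$ or to $\mathfrak g_4$, with $J$ given by \eqref{streqs} or \eqref{streqs2}. For $\mathfrak g_2^0$ I would simply observe that the diagonal form $\omega=-i(\alpha^{1\overline 1}+\alpha^{2\overline 2}+\alpha^{3\overline 3})$ is already closed: from \eqref{streqs} one checks directly that $d\alpha^{1\overline 1}=d\alpha^{2\overline 2}=0$, while $d\alpha^3=0$ gives $d\alpha^{3\overline 3}=0$ (this is the K\"ahler Ricci-flat metric of the Remark following Theorem \ref{lte}). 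Hence $(M,J)$ is K\"ahler and the statement holds trivially in this case. The only genuinely non-K\"ahler case left is $\mathfrak g_4$.

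For $\mathfrak g_4$ the key observation is that \eqref{streqs2} reads $\alpha^{1\overline 1}=\pm\,d\alpha^3$, so that the nonzero semipositive $(1,1)$-form $\phi:=\pm i\,\alpha^{1\overline 1}$ is $d$-exact, namely $\phi=d(\pm i\,\alpha^3)$. I claim this already excludes \emph{every} balanced metric, not just coexistence with an SKT one. Indeed, if $\tilde\omega$ were the fundamental form of an invariant balanced metric, then $\tilde\omega^2$ is closed and Stokes' theorem on the compact manifold $M$ yields
$$\int_M \phi\wedge\tilde\omega^2=\int_M d(\pm i\,\alpha^3)\wedge\tilde\omega^2=\int_M d\bigl(\pm i\,\alpha^3\wedge\tilde\omega^2\bigr)=0 .$$
On the other hand $\phi$ is a nonzero semipositive $(1,1)$-form and $\tilde\omega>0$, so $\phi\wedge\tilde\omega^2=\tfrac13(\mathrm{tr}_{\tilde\omega}\phi)\,\tilde\omega^3$ with $\mathrm{tr}_{\tilde\omega}\phi>0$ everywhere (as $\phi$ has constant rank $1$), whence $\int_M \phi\wedge\tilde\omega^2>0$, a contradiction. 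Thus $\mathfrak g_4$ carries no balanced metric at all, and in particular it cannot carry both an SKT and a balanced metric. This disposes of the non-nilpotent case and completes the proof.

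The conceptual core, and the step I expect to require the most care, is the $\mathfrak g_4$ argument: one must recognize $d\alpha^3$ as (a multiple of) an \emph{exact semipositive} $(1,1)$-form and combine this with the Michelsohn-type positivity $\int_M\phi\wedge\tilde\omega^{n-1}>0$ for a nonzero $\phi\geq 0$. Everything else is bookkeeping — confirming via \cite{FU} that the non-nilpotent SKT-admitting algebras are exactly $\mathfrak g_2^0$ and $\mathfrak g_4$, checking closedness of the diagonal metric on $\mathfrak g_2^0$, and reducing to invariant metrics. Should the positivity step feel delicate for a particular normalisation of $J$, a safe fallback is to read the existence of balanced and of SKT metrics directly off the tables in \cite{FU} and verify case by case that no non-K\"ahler algebra in the list admits both.
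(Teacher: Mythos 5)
Your proposal is correct, but in the non-nilpotent case it takes a genuinely different route from the paper's. The paper intersects two classification results from \cite{FU}: Theorem 4.1 (an SKT metric forces $\frak g \cong \frak g_2^0$ or $\frak g_4$) and Theorem 4.5 (a balanced metric forces $\frak g$ into a list that contains $\frak g_2^0$ but not $\frak g_4$), so coexistence leaves only $\frak g_2^0$, which is then shown to be K\"ahler by exhibiting the closed diagonal form --- exactly your $\frak g_2^0$ step. You avoid Theorem 4.5 entirely: after the same reduction via \cite[Theorem 4.1]{FU}, you kill $\frak g_4$ by hand, noting that \eqref{streqs2} makes $i\alpha^{1\overline 1}$ an exact, nowhere-vanishing, semipositive $(1,1)$-form, so that for any balanced $\tilde\omega$ Stokes' theorem forces $\int_M i\alpha^{1\overline 1}\wedge\tilde\omega^2=0$ while pointwise positivity forces this integral to be strictly positive. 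This is a clean instance of the Michelsohn-type obstruction \cite{M}, and it proves the stronger statement that $(\frak g_4,J_\pm)$ admits no balanced metric whatsoever (invariant or not, so you do not even need Theorem \ref{anna+gueo} for $\tilde g$ there), effectively re-deriving the relevant piece of \cite[Theorem 4.5]{FU} instead of quoting it; what you still share with the paper is the dependence on the complex-structure classification of \cite{FU} (to write down \eqref{streqs} and \eqref{streqs2}) and on Theorem \ref{finosalamonparton} plus \cite{aliv} for the nilpotent case, which the paper handles implicitly by its preceding theorem and you make explicit. One cosmetic slip: the semipositive form is $\phi=i\alpha^{1\overline 1}=d(\pm i\alpha^3)$; as written, $\phi=\pm i\alpha^{1\overline 1}$ is seminegative for one choice of sign, though this is harmless since exactness --- the only property you use --- is insensitive to the sign, and your positivity step needs only that $\phi$ is semipositive and nonzero at every point.
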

 \begin{proof} 
Suppose that $\frak g$ is not nilpotent. By  \cite[Theorem 4.5]{FU}  if $ (M, J)$  has a balanced metric, then the Lie algebra of $\frak g$ is isomorphic to one of the following Lie algebras:
$$
\begin{array}{l}
\frak g_1 =  (e^{15},-e^{25},-e^{35}, e^{45}, 0, 0), \\[3pt]
\frak g_2^{\alpha} = (\alpha e^{15}+e^{25},-e^{15}+\alpha e^{25},- \alpha e^{35} +e^{45},-e^{35}- \alpha e^{45}, 0, 0),  \, \alpha \geq 0, \\[3pt]
\frak g_3 =  (0,-e^{13}, e^{12}, 0,-e^{46},-e^{45}),\\[3pt]
\frak g_5=  (e^{24} + e^{35}, e^{26}, e^{36},-e^{46},-e^{56}, 0), \\[3pt]
\frak g_7 = (e^{24} + e^{35}, e^{46}, e^{56},-e^{26},-e^{36}, 0), \\[3pt]
\frak g_8 =  (e^{16} - e^{25}, e^{15} + e^{26},-e^{36} + e^{45},-e^{35} - e^{46}, 0, 0).
\end{array}
$$
On the other hand by \cite[Theorem 4.1]{FU}
$(M,J)$ has an SKT metric if and only if   the Lie algebra $\frak g$ is either isomorphic to $\frak g_2^0$ or $\frak g_4$. Therefore if  $(M, J)$ admits a   $J$-Hermitian balanced metric and a $J$-Hermitian  SKT metric, then $\frak g$ has to be abelian or  isomorphic to  $\frak g_2^0$. By \cite{FU} every complex structure $J$  on $\frak g \cong  \frak g_2^0$ with a closed $(3,0)$-form is equivalent to  the complex structure $J$  defined by 
the structure equations
\b\label{streqs}
d \alpha^1 = i  (\alpha^{13} + \alpha^{1 \overline 3}), \, d \alpha^2 = - i \alpha^{23} - i  \alpha^{2 \overline 3}, d \alpha^3 =0
\e
with respect to a suitable $(1,0)$-coframe $(\alpha^k)$.  Moreover $(\frak g_2^0, J)$ admits the K\"ahler  metric with associated fundamental form $$\omega = i \alpha^{1\ov 1} + i \alpha^{2 \ov 2} + i \alpha^{3\ov 3}
$$
and so $(M, J)$ is K\"ahler.
\end{proof}

\end{document}